\documentclass{article}

\usepackage{arxiv}

\usepackage[utf8]{inputenc} % allow utf-8 input
\usepackage[T1]{fontenc}    % use 8-bit T1 fonts
\usepackage{hyperref}       % hyperlinks
\usepackage{url}            % simple URL typesetting
\usepackage{booktabs}       % professional-quality tables
\usepackage{amsfonts}       % blackboard math symbols
\usepackage{nicefrac}       % compact symbols for 1/2, etc.
\usepackage{microtype}      % microtypography
\usepackage{lipsum}		% Can be removed after putting your text content
\usepackage{graphicx}
\usepackage{mathtools}
\usepackage{amssymb}
\usepackage{amsthm}
\usepackage{amsmath}
\usepackage{doi}

\usepackage{cases}
\usepackage{microtype}
\usepackage[utf8]{inputenx}
\usepackage{amssymb}
\usepackage{amsthm}
\usepackage{mathtools, cases}
\usepackage{mathbbol}
\usepackage{makecell}
\usepackage{dsfont}
\usepackage{bbm}
\usepackage{mathabx}
\allowdisplaybreaks

\renewcommand{\sinh}{\operatorname{sinh}}
\renewcommand{\cosh}{\operatorname{cosh}}

\newtheorem{thm}{Theorem}

\newtheorem{corollary}{Corollary}

\title{Killing random motions with finite velocity}

\author{ %\href{https://orcid.org/0000-0000-0000-0000}{Manfred Marvin Marchione}\thanks{Use footnote for providing further information about author (webpage, alternativeaddress)---\emph{not} for acknowledging funding agencies.}\\
	\href{https://orcid.org/0000-0002-6421-533X}{Enzo Orsingher} \\
	Department of Statistical Sciences\\
	Sapienza University of Rome\\
	\texttt{enzo.orsingher@uniroma1.it}
	\And
	\href{https://orcid.org/0000-0002-6163-044X}{Manfred Marvin Marchione}\\
	Independent Researcher\\
	\texttt{manfredmarvin.marchione@gmail.com}
}

\date{\today}

\renewcommand{\headeright}{}
\renewcommand{\undertitle}{}

\begin{document}
\maketitle

\begin{abstract}
In this paper, we study finite-velocity random motions subject to a killing mechanism acting at direction changes: at each change of direction, the motion is killed with a fixed probability and remains at its current position thereafter. We first consider the one-dimensional case of a standard telegraph process. We derive the exact distribution of the position at the killing time and show that it is Laplace distributed. We also characterize the law of the process conditional on survival up to a fixed time, proving that it coincides with that of a standard telegraph process driven by a thinned Poisson process with reduced switching intensity. We further derive the full time-dependent distribution of the killed process, including both its absolutely continuous and singular components. We then extend the analysis to a planar finite-velocity random motion with infinitely many possible directions. In this setting, we derive the distribution of the killed process and obtain an explicit representation of its density in terms of modified Bessel functions and a series involving Bessel polynomials.
\end{abstract}

%\keywords{Telegraph process, Dirichlet problem, Systems of differential equations}

\section{Introduction}
Finite-velocity random motions have been extensively studied in the probabilistic literature, giving rise to a broad range of models with different mechanisms governing their evolution. The classical univariate telegraph process can be traced back to the works by Goldstein \cite{goldstein} and Kac \cite{kac}, and its exact probability distribution was later derived by Orsingher \cite{orsingher}. The telegraph process with drift was introduced by Cane \cite{cane} and further developed Beghin et al. \cite{beghin2001probabilistic}.\\
Planar extensions have been proposed by several authors under different stochastic mechanisms. Planar random motions with constant speed and orthogonal directions were studied by Orsingher \cite{orsingher2000exact}, while motions with a minimal set of three directions were investigated by Di Crescenzo \cite{dicrminimal}. In the case with orthogonal directions, several modifications of the process have been proposed involving various mechanisms for the direction changes. A general framework was proposed by Orsingher and Marchione \cite{vortex}, who discussed a model in which clockwise and counterclockwise switches occur with different probabilities. Random motions with an arbitrary finite number of directions were proposed by Lachal \cite{lachal}, while Orsingher and De Gregorio \cite{orsingherdegreg} studied random flights with a continuum of possible directions, which are chosen with uniform distribution.\\
Other extensions of classical finite-velocity random motions include replacing the Poisson process governing direction changes with more general renewal processes, as in Martinucci et al. \cite{martinucci}, and allowing for space-varying velocities, as studied by Masoliver and Weiss \cite{masoliver}, Angelani et al. \cite{angelanietal} and Orsingher and Marchione \cite{OMSV}.\\
Additional lines of research aim at introducing random events that modify the evolution of the particle beyond its ordinary velocity changes. Ratanov \cite{ratanov} included a random jump component to the standard telegraph process, and discussed related applications to financial modeling. Stochastic resetting, where the particle is returned to a prescribed position at random times, was studied for one-dimensional run-and-tumble motions by Evans and Majumdar \cite{evans}. More recently, Di Crescenzo et al. \cite{iuliano} combined Poissonian resetting with a generalized telegraph motion driven by geometric counting processes. These studies illustrate the continuing interest in how changes to the switching mechanism and additional random interventions affect the distribution and properties of finite-velocity random motions.\\
In this paper, we investigate one-dimensional and planar random motions subject to a random killing mechanism. In particular, by assuming that the direction changes are paced by a Poisson process, we assume that every time a direction change occurs the process can be killed with a fixed probability $p\in(0,1)$, in the sense that with probability $p$ the process stops moving and remains at its current position thereafter.\\
\noindent We start by studying the univariate case. We consider a standard telegraph process $\{X(t)\}_{t\ge0}$ with initial position $X(0)=0$. At the initial time $t=0$ the process starts moving either rightwards or leftwards with probability $\frac{1}{2}$, and moves with constant velocity $c>0$. The process then switches direction according to a Poisson process $N(t)$ with constant intensity $\lambda$. Each time a direction change occurs, the telegraph process is killed with probability $p$. Denoting by $\mathcal{T}$ the random time at which the killing occurs, we represent the killed telegraph process as $X(t\wedge\mathcal{T})$.\\
Our first result is that the position of the telegraph process at the killing time follows a Laplace distribution with probability density function
\begin{equation}
\mathbb{P}\Big(X(\mathcal{T})\in dx\Big)/dx=\frac{\lambda\sqrt{p(2-p)}}{2c}\exp\left\{-\frac{\lambda\sqrt{p(2-p)}}{c}|x|\right\},\qquad x\in\mathbb{R}.\label{introtort}
\end{equation}
We also obtain the distribution of $X(\mathcal{T})$ conditional on the number of Poisson events up to the killing time and the initial direction of the motion. In the derivation of the probability density function (\ref{introtort}), we use some preliminary results concerning the Laplace distribution. In particular, given a sequence of independent Laplace random variables $L_k,\,k=1,2,...$, with common density
$$\mathbb{P}\Big(L_k\in dx\Big)=\frac{b}{2}\,e^{-b\,\lvert x\lvert}\;dx,\qquad x\in\mathbb{R}.$$
we use the fact that the sum $S_n=\sum_{k=1}^n L_k$ has probability density function
\begin{equation}\mathbb{P}\Big(S_n\in dx\Big)/dx=\left(\frac{b}{2}\right)^ne^{-b\lvert x\lvert}\sum_{k=0}^{n-1}{n-1+k \choose k} \frac{1}{(n-k-1)!}\left(\frac{1}{2b}\right)^k\lvert x\lvert^{n-k-1},\qquad x\in\mathbb{R}.\label{intro:sumlapl}\end{equation}
The derivation of the distribution (\ref{intro:sumlapl}) of the sum of independent Laplace random variables is a tricky problem, which has nevertheless already been solved in the literature. A standard reference is the monograph by Kotz et al. \cite{kotz}, where the result is obtained by using the properties of Bessel functions. Here we show that the same formula can be recovered in an elegant and direct way from the inversion of the Laplace transform, through an application of the residue theorem. This alternative derivation is presented in Appendix \ref{appendixLaplace} of the present paper.\\
We continue our analysis by investigating the exact distribution of the telegraph process $X(t)$ before the killing occurs. In particular, we show that, conditional on survival up to time $t$, the trajectory of the process on $[0,t]$ has the same law as that of a telegraph process with switching intensity $\lambda(1-p)$. In particular, the conditional distribution of $X(t)\lvert\{\mathcal{T}>t\}$ is supported on $[-ct,ct]$, with a singular component where $x=\pm ct.$ Denoting by $\delta(\cdot)$ the Dirac delta function, we prove that
\begin{align*}\mathbb{P}\Big(X(t)\in dx\;\Big\lvert\,\mathcal{T}>t\Big)/dx=&\frac{1}{2}e^{-\lambda(1-p)t}\big[\delta(x-ct)+\delta(x+ct)\big]\\
+\frac{e^{-\lambda(1-p)t}}{2c}&\mathbbm{1}_{\{|x|<ct\}}\Bigg[\lambda(1-p)I_0\left(\frac{\lambda(1-p)}{c}\sqrt{c^2t^2-x^2}\right)\\
&\qquad+\frac{\partial}{\partial t}I_0\left(\frac{\lambda(1-p)}{c}\sqrt{c^2t^2-x^2}\right)\Bigg].
\end{align*}
We provide an interpretation of this result in terms of thinning of Poisson processes. Combining the results presented above yields the distribution of the stopped process $X(t\wedge\mathcal{T})$, which reads
$$
\begin{aligned}
\frac{\mathbb{P}\Big(X(t\wedge\mathcal{T})\in dx\Big)}{dx}
={}&\frac{e^{-\lambda t}}{2}\big[\delta(x-ct)+\delta(x+ct)\big]\\
&+\frac{\mathbbm{1}_{\{|x|<ct\}}}{2c}\Bigg\{e^{-\lambda t}\left[\lambda I_0\left(\frac{\lambda(1-p)}{c}\sqrt{c^2t^2-x^2}\right)+\frac{\partial}{\partial t}I_0\left(\frac{\lambda(1-p)}{c}\sqrt{c^2t^2-x^2}\right)\right]\\
&\qquad+\lambda^2p(2-p)\int_{|x|/c}^{t}e^{-\lambda s}I_0\left(\frac{\lambda(1-p)}{c}\sqrt{c^2s^2-x^2}\right)\,ds\Bigg\}.
\end{aligned}
$$

In the remainder of the paper, we extend our analysis to a planar random motion with infinitely many possible directions. We consider a bivariate process $\{(X(t),Y(t))\}_{t\geq0}$ starting at the origin of the Cartesian plane and moving with constant speed $c>0$, with an initial direction chosen uniformly on $[0,2\pi)$. At each event of a Poisson process $N(t)$ with intensity $\lambda$, the motion is killed with probability $p$. Otherwise, a new direction is chosen uniformly and independently of the previous ones. As in the univariate case, we denote the killing time by $\mathcal{T}$.\\
Our main result for the planar model is an explicit representation of the probability density function of $(X(\mathcal{T}),Y(\mathcal{T}))$ in terms of modified Bessel functions and Bessel polynomials. In particular, we obtain, for $(x,y)\in\mathbb{R}^2$,
\begin{align*}
\frac{\mathbb{P}\Big(X(\mathcal{T})\in dx,\;Y(\mathcal{T})\in dy\Big)}{dx\,dy}=&\frac{\lambda^2p(1-p)}{2\pi c^2}K_0\left(\frac{\lambda}{c}\sqrt{p(2-p)}\,\sqrt{x^2+y^2}\right)\\
+\frac{\lambda p}{2\pi c}\frac{e^{-\frac{\lambda}{c}\,\sqrt{x^2+y^2}}}{\sqrt{x^2+y^2}}&\left[1+\frac{1}{2}\sum_{n=0}^{\infty}\left(\frac{2\lambda(1-p)^2\,\sqrt{x^2+y^2}}{c}\right)^{n+1}\frac{n!}{(2n+1)!}\,y_n\left(\frac{c}{\lambda \,\sqrt{x^2+y^2}}\right)\right]
\end{align*}
where $K_\nu$ denotes the modified Bessel function of the second kind and $y_n$ is the Bessel polynomial
$$y_n(z)=\sum_{k=0}^n\frac{(n+k)!}{(n-k)!\,k!}\left(\frac{z}{2}\right)^k,\qquad n\in\mathbb{N}_0.$$
The two terms in the density correspond respectively to killing at an even and an odd Poisson event. In particular, the contribution associated with an even number of events reduces to a single modified Bessel function, whereas the odd contribution admits a series representation in terms of Bessel polynomials.\\
To derive this result, we first study the distribution conditional on the number of Poisson events up to the killing time. For $n\geq1$, we obtain
\begin{align*}
&\frac{\mathbb{P}\Big(X(\mathcal{T})\in dx,\;Y(\mathcal{T})\in dy\;\Big\lvert\,N(\mathcal{T})=n\Big)}{dx\,dy}\\
&\qquad=\frac{1}{2^{n/2}\pi\,\Gamma(n/2)}\left(\frac{\lambda}{c}\right)^{n/2+1}(x^2+y^2)^{\frac{1}{2}\left(\frac{n}{2}-1\right)}K_{\frac{n}{2}-1}\left(\frac{\lambda}{c}\,\sqrt{x^2+y^2}\right).
\end{align*}
Averaging these conditional densities with respect to the geometric distribution of $N(\mathcal{T})$ yields the unconditional density presented above. We also recover the same expression by directly inverting the characteristic function
$$\mathbb{E}\left[e^{i[\alpha X(\mathcal{T})+\beta Y(\mathcal{T})]}\right]=\frac{\lambda p}{\sqrt{\lambda^2+c^2(\alpha^2+\beta^2)}-\lambda(1-p)},\qquad \alpha,\beta\in\mathbb{R}.$$
Both the conditional and unconditional distributions are rotationally invariant. Consequently, the distance from the origin and the angular component are independent, with the latter uniformly distributed on $[0,2\pi)$. We also derive the corresponding radial densities.\\
Finally, we investigate the distribution of the planar motion conditional on survival up to time $t$. The thinning interpretation developed in the univariate case applies here as well: conditional on $\mathcal{T}>t$, the trajectory on $[0,t]$ has the same law as that of the original planar motion with Poisson intensity reduced to $\lambda(1-p)$. Moreover, conditional on $N(t)=n$ and $\mathcal{T}>t$, the distribution is independent of both $\lambda$ and $p$. Thus, the conditional density obtained by Orsingher and De Gregorio \cite{orsingherdegreg} gives
$$\frac{\mathbb{P}\Big(X(t)\in dx,\;Y(t)\in dy\;\Big\lvert\,N(t)=n,\;\mathcal{T}>t\Big)}{dx\,dy}=\frac{n}{2\pi(ct)^n}\left(c^2t^2-x^2-y^2\right)^{n/2-1},\qquad x^2+y^2<c^2t^2,\quad n\geq1.$$
Averaging over the number of events of the thinned Poisson process and accounting for the singular component on the circumference $\sqrt{x^2+y^2}=ct$, corresponding to trajectories with no direction changes before time $t$, we obtain the explicit distribution conditional on survival:
\begin{align}\frac{\mathbb{P}\Big(X(t)\in dx,\;Y(t)\in dy\;\Big\lvert\,\mathcal{T}>t\Big)}{dx\,dy}&=\frac{e^{-\lambda(1-p)t}}{\pi}\,
\delta\left(x^2+y^2-c^2t^2\right)\nonumber\\
+\frac{\lambda(1-p)}{2\pi c}&\frac{\exp\left\{-\lambda(1-p)t+\frac{\lambda(1-p)}{c}\sqrt{c^2t^2-x^2-y^2}\right\}}{\sqrt{c^2t^2-x^2-y^2}},\qquad x^2+y^2<c^2t^2.\nonumber\end{align}
Based on the identity
$$
\begin{aligned}
&\mathbb{P}\big(X(t\wedge\mathcal{T})\in dx,Y(t\wedge\mathcal{T})\in dy\big)\\
&\qquad=
\mathbb{P}\big(X(t)\in dx,Y(t)\in dy,\mathcal{T}>t\big)
+
\int_0^t
\mathbb{P}\big(X(\mathcal{T})\in dx,Y(\mathcal{T})\in dy,\mathcal{T}\in ds\big).
\end{aligned}
$$
the results presented so far permit us to provide an explicit representation for the distribution of the killed planar motion $\mathbb{P}\big(X(t\wedge\mathcal{T})\in dx,Y(t\wedge\mathcal{T})\in dy\big)$.\\
The paper is organized in the following manner. In Section \ref{sec:uniT}, we introduce the problem in the univariate setting, define the killing mechanism, and derive the distribution of the process at the killing time. In Section \ref{sec:3}, we study the process conditional on survival and use the resulting distribution to obtain the law of the killed telegraph process. Section \ref{sec:4} extends these results to the planar setting.

\section{Distribution of the telegraph process at the killing time}\label{sec:uniT}
Consider a telegraph process $\{X(t)\}_{t\ge 0}$ with initial position $X(0)=0$. At time $t=0$, the process starts moving either to the right or to the left with probability $\frac{1}{2}$. It then moves with constant velocity $c>0$ and changes direction at random times governed by a homogeneous Poisson process. We denote by $N(t)$ the Poisson process that governs the direction changes, and by $\lambda>0$ its constant intensity. Throughout the paper, we assume that $N(t)$ is taken in its standard right-continuous version, with left limits. Moreover, the direction of motion at time $t$ is denoted by $D(t)$. We write $D(t)=d_0$ when $X(t)$ is moving to the right, and $D(t)=d_1$ when it is moving to the left.\\
In contrast to previous works in the literature, we assume that, whenever a Poisson event occurs, the telegraph process ${X(t)}_{t\ge 0}$ is killed with probability $p\in(0,1)$. In that case, the process does not move any further and remains permanently at its current position. We denote by $\mathcal{T}$ the time at which the telegraph process $X(t)$ is killed. By construction, the random variable $\mathcal{T}$ must coincide with one of the arrival times of the Poisson process $N(t)$. Formally, we define the arrival times as $$T_k=\inf\{t>0:\;N(t)=k\},\qquad k\in\mathbb{N}$$ and we adopt the convention that $T_0=0$. Observe that, in view of the assumption that the Poisson process is right-continuous, we have that $N(T_k)=k$ for all $k\in\mathbb{N}$. Conditionally on the arrival times $T_1,\,T_2,\,T_3,\,...\,$, it holds that
\begin{equation}\label{tauconditionaldef}\mathbb{P}\Big(\mathcal{T}=T_k\Big\lvert T_1,\,T_2,\,T_3,\,...\Big)=(1-p)^{k-1} p,\qquad k\in\mathbb{N}\end{equation}
An immediate consequence of formula (\ref{tauconditionaldef}) is that the number of Poisson events up to the killing time has a geometric distribution. Indeed,
\begin{equation}\label{geometricdistribution}\mathbb{P}\Big(N(\mathcal{T})=k\Big)=(1-p)^{k-1} p,\qquad k\in\mathbb{N}.\end{equation}
Moreover, it is straightforward to verify that the killing time $\mathcal{T}$ satisfies the identity
\begin{equation}\label{identityT}\mathcal{T}=\sum_{k=1}^{N(\mathcal{T})}(T_{k}-T_{k-1})\end{equation}
which will play a key role in our analysis.

We observe that formula (\ref{tauconditionaldef}) states that, conditionally on the Poisson arrival times, the killing time $\mathcal{T}$ has discrete distribution with support given by the set of the arrival times. However, its unconditional distribution is a continuous exponential distribution, as shown in the following result.
\begin{thm}The random variable $\mathcal{T}$ has exponential distribution with probability density function \begin{equation}\mathbb{P}\big(\mathcal{T}\in dt\big)/dt=\lambda p\,e^{-\lambda p t},\qquad t\in\mathbb{R}^+.\label{lpdens}\end{equation}\end{thm}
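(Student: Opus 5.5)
We will prove the statement by conditioning on the number of Poisson events up to the killing time, using identity (\ref{identityT}) together with the geometric law (\ref{geometricdistribution}). The key structural fact we need is that $N(\mathcal{T})$ is independent of the inter-arrival times $T_k-T_{k-1}$. This follows from (\ref{tauconditionaldef}): conditionally on the whole sequence of arrival times, $\mathbb{P}(N(\mathcal{T})=k\mid T_1,T_2,\dots)=(1-p)^{k-1}p$. Since the right-hand side does not depend on the arrival times, $N(\mathcal{T})$ is independent of $(T_1,T_2,\dots)$. Equivalently, the killing decisions are i.i.d.\ Bernoulli$(p)$ marks independent of the Poisson process.

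Given this independence, (\ref{identityT}) expresses $\mathcal{T}$ as a geometric sum of i.i.d.\ exponential random variables $E_k=T_k-T_{k-1}$ with rate $\lambda$, where the number of terms is independent of the summands. The plan is then to compute
\begin{equation*}
\mathbb{P}(\mathcal{T}\in dt)/dt=\sum_{n=1}^\infty (1-p)^{n-1}p\,\frac{\lambda^n t^{n-1}}{(n-1)!}e^{-\lambda t},
\end{equation*}
using that $T_n=E_1+\dots+E_n$ has the Gamma (Erlang) density. Summing the exponential series gives $\lambda p\, e^{-\lambda t}e^{\lambda(1-p)t}=\lambda p\, e^{-\lambda p t}$, which is (\ref{lpdens}).

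As a cross-check, we can compute the Laplace transform instead:
\begin{equation*}
\mathbb{E}e^{-s\mathcal{T}}=\sum_{n\ge1}(1-p)^{n-1}p\left(\frac{\lambda}{\lambda+s}\right)^n=\frac{\lambda p}{\lambda p+s}.
\end{equation*}
This is the transform of the exponential law with rate $\lambda p$. Alternatively, the thinning theorem already gives the result: the killing events form a Poisson process of rate $\lambda p$, and $\mathcal{T}$ is its first arrival.

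The only genuinely delicate step is justifying the independence between $N(\mathcal{T})$ and the inter-arrival times, so that conditioning on $N(\mathcal{T})=n$ does not alter the law of $T_n$. We will argue it directly from (\ref{tauconditionaldef}) as described above, writing
\begin{equation*}
\mathbb{P}(\mathcal{T}\in dt)=\sum_n \mathbb{E}\bigl[\mathbbm{1}_{\{T_n\in dt\}}\,\mathbb{P}(\mathcal{T}=T_n\mid T_1,T_2,\dots)\bigr].
\end{equation*}
This immediately factorizes into $(1-p)^{n-1}p\,\mathbb{P}(T_n\in dt)$. The remaining steps are routine series manipulation.
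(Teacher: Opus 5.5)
Your proof is correct and follows essentially the same route as the paper: both use the identity $\mathcal{T}=\sum_{k=1}^{N(\mathcal{T})}(T_k-T_{k-1})$ to write $\mathcal{T}$ as a geometric sum of i.i.d.\ exponential interarrival times and then condition on $N(\mathcal{T})$. The only difference is the computational step: the paper computes the characteristic function $\lambda p/(\lambda p-i\alpha)$, which is exactly your Laplace-transform cross-check, while your main line sums the geometric mixture of Erlang densities directly. Your explicit derivation from (\ref{tauconditionaldef}) that $N(\mathcal{T})$ is independent of the interarrival times is a welcome addition, since the paper uses this independence without comment when it writes $\mathbb{E}[e^{i\alpha\mathcal{T}}]=\mathbb{E}[(\lambda/(\lambda-i\alpha))^{N(\mathcal{T})}]$.
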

\begin{proof}We recall that, in view of the standard theory of Poisson processes, the interarrival times $T_k-T_{k-1}$, $k\in\mathbb{N}$, are independent and identically distributed exponential random variables with common characteristic function $$\mathbb{E}\Big[e^{i\alpha T_1}\Big]=\frac{\lambda}{\lambda-i\alpha},\qquad \alpha\in\mathbb{R}.$$
Thus, for $\alpha\in\mathbb{R}$, the identity (\ref{identityT}) permits us to express the characteristic function of $\mathcal{T}$ in the form
\begin{align}
\mathbb{E}\Big[e^{i\alpha \mathcal{T}}\Big]=&\mathbb{E}\Big[e^{i\alpha \sum_{k=1}^{N(\mathcal{T})}(T_{k}-T_{k-1})}\Big]=\mathbb{E}\left[\mathbb{E}\Big[e^{i\alpha T_1}\Big]^{N(\mathcal{T})}\right]\nonumber\\
=&\mathbb{E}\left[\left(\frac{\lambda}{\lambda-i\alpha}\right)^{N(\mathcal{T})}\right]=\frac{\lambda p}{\lambda p-i\alpha} \label{cflp}
\end{align}
where we have used the fact that $N(\mathcal{T})$ has a geometric distribution in the last step. The proof is completed by observing that the characteristic function (\ref{cflp}) is the Fourier transform of the probability density function (\ref{lpdens}).
\end{proof}
Having obtained the exact distribution of $\mathcal{T}$, we now investigate the distribution of the telegraph process at the killing time. In other words, we study the distribution of the random variable $X(\mathcal{T})$. To this aim, we first recall the following standard stochastic representation of the telegraph process:
\begin{equation}\label{telegraphstoch}X(t)=V(0)\int_0^t(-1)^{N(s)}ds\end{equation} where $V(0)$ is a binary random variable having distribution
\begin{equation}V(0)=\begin{cases}-c&\text{with prob. }\frac{1}{2}\\+c&\text{with prob. }\frac{1}{2}.\end{cases}\label{V0dis}\end{equation} Clearly,  the random variable $V(0)$ represents the initial velocity of the telegraph process, the sign of which depends on whether the process initially moves to the left or to the right. By combining formulas (\ref{identityT}) and (\ref{telegraphstoch}), the killed telegraph process can be expressed in the form
\begin{equation}X(\mathcal{T})=V(0)\sum_{k=1}^{N(\mathcal{T})}(-1)^{k+1}(T_{k}-T_{k-1}).\label{XTrepr}\end{equation}
Based on the representation (\ref{XTrepr}), we are able to study the distribution of $X(\mathcal{T})$ conditional on both the number of direction changes $N(\mathcal{T})$ at the killing time and the initial velocity $V(0)$ of the telegraph process. In particular, we consider, for $j=0,1$ and $n\in\mathbb{N}$, the probability density functions
$$f_{j,n}(x)=\mathbb{P}\Big(X(\mathcal{T})\in dx\,\Big\lvert D(0)=d_j,\;N(\mathcal{T})=n\Big)/dx,\qquad x\in\mathbb{R}.$$
By removing the conditioning with respect to the initial direction, we also define the density function
\begin{equation}f_n(x)=\mathbb{P}\Big(X(\mathcal{T})\in dx\,\Big\lvert N(\mathcal{T})=n\Big)/dx\label{fnuni}\end{equation}
In order to study the probability density functions defined above, we distinguish the two cases according to whether $N(\mathcal{T})$ is even or odd.\\
We start by analyzing the even case, i.e. the case in which $N(\mathcal{T})=2n$ for some $n\in\mathbb{N}$. Under this assumption, formula (\ref{XTrepr}) reduces to
\begin{align}X(\mathcal{T})\,\Big\lvert \{N(\mathcal{T})=2n\}\;=\,V(0)\sum_{k=1}^{n}\Big[(T_{2k-1}-T_{2k-2})-(T_{2k}-T_{2k-1})\Big].\label{evensumXT}\end{align}
We focus now our attention on the terms \begin{equation}L_k := (T_{2k-1}-T_{2k-2})-(T_{2k}-T_{2k-1}),\qquad k\in\mathbb{N}\label{lkdef}\end{equation} which appear in formula (\ref{evensumXT}). Since the Poisson interarrival times $T_j-T_{j-1},\;j\in\mathbb{N}$, are i.i.d. random variables with exponential distribution, it can be verified that the random variables $L_k$ follow a Laplace distribution with probability density function
$$\mathbb{P}\Big(L_k\in dx\Big)=\frac{\lambda}{2}\,e^{-\lambda\,\lvert x\lvert}\;dx,\qquad x\in\mathbb{R}.$$
Moreover, in view of the symmetry of the Laplace distribution, it follows immediately that the distribution of $V(0)\cdot L_k$ does not depend on the sign of $V(0)$ and is still a Laplace distribution with modified scale parameter. In particular, it holds that
\begin{equation}\mathbb{P}\Big(V(0)\cdot L_k\,\in dx\Big)=\mathbb{P}\Big(c\, L_k\,\in dx\Big)=\mathbb{P}\Big(-c\, L_k\,\in dx\Big)=\frac{\lambda}{2c}\,e^{-\frac{\lambda}{c}\,\lvert x\lvert}\;dx,\qquad x\in\mathbb{R}.\label{lxmor}\end{equation}
Thus, formula (\ref{evensumXT}) implies that, conditional on $N(\mathcal{T})=2n$, the distribution of $X(\mathcal{T})$ coincides with the distribution of the sum of $n$ independent Laplace random variables with probability density function (\ref{lxmor}). Hence, in view of formula (\ref{intro:sumlapl}), the probability density function (\ref{fnuni}) reads, for $n\in\mathbb{N}$,
\begin{equation}f_{2n}(x)=\left(\frac{\lambda}{2c}\right)^ne^{-\frac{\lambda}{c}\lvert x\lvert}\sum_{k=0}^{n-1}{n-1+k \choose k} \frac{1}{(n-k-1)!}\left(\frac{c}{2\lambda}\right)^k\lvert x\lvert^{n-k-1},\qquad x\in\mathbb{R}.\label{sumlaplacedensity}\end{equation}
Futhermore, it follows from formula (\ref{lxmor}) that $$f_{0,2n}(x)=f_{1,2n}(x)=f_{2n}(x)$$ which completes the analysis of the case in which $N(\mathcal{T})$ is an even number.\\
We now turn to the case in which the telegraph process is killed after an odd number of direction changes. Thus, we consider the distribution of $X(\mathcal{T})$ conditional of the event $N(\mathcal{T})=2n+1$, with $n\in\mathbb{N}_0$. In this case, formula (\ref{XTrepr}) can be represented as
\begin{align}X(\mathcal{T})\,\Big\lvert \{N(\mathcal{T})=2n+1\}\;=\,V(0)\,(T_{2n+1}-T_{2n})\,+\,V(0)\sum_{k=1}^{n}\Big[(T_{2k-1}-T_{2k-2})-(T_{2k}-T_{2k-1})\Big].\label{oddsumXT}\end{align}
The identity (\ref{oddsumXT}) implies that, if the distribution of $X(\mathcal{T})$ is conditioned on both an odd number of direction changes and the initial direction, its conditional distribution is no longer symmetric. The resulting asymmetric conditional distribution is derived in the following theorem.

\begin{thm}For $n\in\mathbb{N}$ and $x\in\mathbb{R}$, it holds that
\begin{align}
f_{0,2n+1}(x)=&\frac{\lambda e^{-\frac{\lambda}{c}\lvert x\lvert}}{4^n\,c}{2n-1\choose n-1}\nonumber\\
&+\mathbbm{1}_{\{x>0\}}\left(\frac{\lambda}{2c}\right)^n\frac{\lambda\,e^{-\frac{\lambda}{c}\lvert x\lvert}}{c\,(n-1)!}\sum_{k=0}^{n-1}\frac{(n-1+k)!}{(n-k)!\,k!}\left(\frac{c}{2\lambda}\right)^k \lvert x\lvert^{n-k}\nonumber\\
&+\mathbbm{1}_{\{x<0\}}\left(\frac{\lambda}{2c}\right)^n\frac{\lambda\,e^{-\frac{\lambda}{c}\lvert x\lvert}}{c\,n!}\sum_{r=0}^{n-2}\frac{(n+r)!}{(n-r-1)!\,r!}\left(\frac{c}{2\lambda}\right)^{r+1}\lvert x\lvert^{n-r-1}.\label{delfmigun}
\end{align}

Moreover, \begin{equation}f_{1,2n+1}(x)=f_{0,2n+1}(-x).\label{delfmigdu}\end{equation}
\end{thm}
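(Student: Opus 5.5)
Conditional on $D(0)=d_0$ (so $V(0)=c$) and $N(\mathcal{T})=2n+1$, the representation (\ref{oddsumXT}) gives $X(\mathcal{T})=cE+S$. Here $E=T_{2n+1}-T_{2n}$ is exponential with rate $\lambda$, and $S=c\sum_{k=1}^n L_k$ is a sum of $n$ independent Laplace variables with density (\ref{lxmor}). The variable $E$ is independent of $S$, since it involves a disjoint interarrival time. Conditioning on $N(\mathcal{T})$ does not alter the joint law of the interarrival times, because the killing marks are independent of the Poisson arrivals by (\ref{tauconditionaldef}). Hence $cE$ has density $\frac{\lambda}{c}e^{-\frac{\lambda}{c}y}\mathbbm{1}_{\{y>0\}}$ and $S$ has density $f_{2n}$ from (\ref{sumlaplacedensity}), and I would compute the convolution
$$f_{0,2n+1}(x)=\frac{\lambda}{c}\int_0^\infty e^{-\frac{\lambda}{c}y}f_{2n}(x-y)\,dy .$$
The symmetry (\ref{delfmigdu}) is immediate: switching to $d_1$ replaces $V(0)=c$ by $-c$, so $X(\mathcal{T})$ becomes $-cE-S$, and $-S\overset{d}{=}S$.

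Alternatively, one can work in Fourier space. The characteristic function of $cE+S$ is $\frac{\lambda}{\lambda-ic\alpha}\bigl(\frac{\lambda^2}{\lambda^2+c^2\alpha^2}\bigr)^n=\frac{\lambda^{2n+1}}{(\lambda-ic\alpha)^{n+1}(\lambda+ic\alpha)^n}$. This can be inverted with the residue theorem, as in Appendix \ref{appendixLaplace}: for $x>0$ one closes in the half-plane enclosing the pole of order $n+1$ at $\alpha=-i\lambda/c$, and for $x<0$ the pole of order $n$ at $\alpha=i\lambda/c$. The different pole orders explain the asymmetry between the $x>0$ sum (indices up to $n-1$, powers $|x|^{n-k}$) and the $x<0$ sum. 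I would use the direct convolution as the main route and keep the residue computation as a check.

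The convolution splits into regions. Set $\mu=\lambda/c$ and write $f_{2n}(z)=e^{-\mu|z|}P(|z|)$, where $P$ is the polynomial in (\ref{sumlaplacedensity}).
\begin{itemize}
\item For $x<0$, we always have $x-y<0$, so $|x-y|=|x|+y$. The integral is $\mu e^{-\mu|x|}\int_0^\infty e^{-2\mu y}P(|x|+y)\,dy$, a finite sum of gamma integrals after binomially expanding $(|x|+y)^{m}$.
\item For $x>0$, split at $y=x$. On $(x,\infty)$ the same structure as above appears with $|x|$ replaced by $y-x$. On $(0,x)$ we have $|x-y|=x-y$, the exponentials cancel to $e^{-\mu x}$, and the integral of $P(x-y)$ gives a polynomial in $x$ whose powers are raised by one. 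This produces the $|x|^{n-k}$ terms in the first sum of (\ref{delfmigun}).
\end{itemize}

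The main obstacle is the combinatorial rearrangement. The gamma-integral pieces produce double sums of products of binomials and powers of $\frac{c}{2\lambda}$. These must collapse to three things: the constant term $\frac{\lambda}{4^n c}\binom{2n-1}{n-1}e^{-\mu|x|}$, which is common to both sides and arises from the "tail" part $\int e^{-2\mu y}\cdots$ evaluated at the boundary; the $x>0$ sum; and the $x<0$ sum with indices $r\le n-2$. I would first try to identify these via the residue expansion, where the coefficients come from Leibniz's rule applied to $(\lambda+ic\alpha)^{-n}$ and directly give factors like $\frac{(n+r)!}{r!}$. I would then reconcile the two routes using Vandermonde-type identities such as $\sum_k\binom{n-1+k}{k}2^{-k}\ldots$, separating out the constant ($|x|^0$) term explicitly to match the first line of (\ref{delfmigun}).
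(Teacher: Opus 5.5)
Your main route is the paper's: the convolution $\int_0^\infty f_{2n}(x-z)\,h(z)\,dz$ is split at $z=x$ for $x>0$, giving the raised powers plus a pure tail constant, and is binomially expanded for $x<0$, where the $r=n-1$ term supplies the common constant $\frac{\lambda}{4^n c}\binom{2n-1}{n-1}$. The double sums collapse using only the unweighted hockey-stick identity $\sum_{k=0}^{r}\binom{n-1+k}{k}=\binom{n+r}{r}$, because the powers of $\frac{c}{2\lambda}$ depend only on the resulting exponent of $|x|$; no $2^{-k}$-weighted Vandermonde sum arises, and your residue route (not used in the paper) gives the final coefficients directly, with no identity needed at all.
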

\begin{proof}By defining the random variable \begin{equation}E_{2n+1}:=c\,(T_{2n+1}-T_{2n})\label{E2n+1def}\end{equation} and taking into account that the Poisson interarrival times are exponentially distributed, the probability density function of $E_{2n+1}$ is \begin{equation}h(x)=\frac{\lambda}{c}\,e^{-\frac{\lambda}{c} x},\qquad x\in\mathbb{R}^+.\label{taradatta}\end{equation} By conditioning equation (\ref{oddsumXT}) on the initial direction being $d_0$, we have that
$$X(\mathcal{T})\,\Big\lvert \{N(\mathcal{T})=2n+1,\,D(0)=d_0\}\;=\,E_{2n+1}+c\sum_{k=1}^{n}L_k$$ where the random variables $L_k,\,k=1,...,n$, are defined in formula (\ref{lkdef}). In other words, the conditional random variable $X(\mathcal{T})\,\Big\lvert \{N(\mathcal{T})=2n+1,\,D(0)=d_0\}$ coincides with the sum of $n$ independent Laplace random variables with common density function (\ref{lxmor}) and an exponentally distributed random variable with density (\ref{taradatta}). Thus, by taking into account that the sum of the Laplace variables has probability density function (\ref{sumlaplacedensity}), the conditional density of $X(\mathcal{T})$ can be expressed by means of the convolution integral
\begin{align}f_{0,2n+1}(x)&=\int_0^{+\infty}f_{2n}(x-z)\,h(z)\,dz\nonumber\\
&=\left(\frac{\lambda}{2c}\right)^n\frac{\lambda}{c\,(n-1)!}\,\sum_{k=0}^{n-1}\frac{(n-1+k)!}{(n-1-k)!\;k!}\;\left(\frac{c}{2\lambda}\right)^k\int_0^{+\infty}e^{-\frac{\lambda}{c} \lvert x-z \lvert-\frac{\lambda}{c} z}\,\lvert x-z\lvert^{n-k-1}\mathop{dz}.\label{intsts}\end{align}
To reformulate the expression (\ref{intsts}), we consider the cases of positive and negative values of $x$ separately.\\
\noindent For $x>0$, we can write that 
\begin{align}
f_{0,2n+1}(x)&=\left(\frac{\lambda}{2c}\right)^n\frac{\lambda\,e^{-\frac{\lambda}{c}x}}{c\,(n-1)!}\,\sum_{k=0}^{n-1}\frac{(n-1+k)!}{(n-1-k)!\;k!}\;\left(\frac{c}{2\lambda}\right)^k\int_0^{x}(x-z)^{n-k-1}\mathop{dz}\nonumber\\
&\qquad\qquad\qquad+\left(\frac{\lambda}{2c}\right)^n\frac{\lambda\,e^{-\frac{\lambda}{c}x}}{c\,(n-1)!}\,\sum_{k=0}^{n-1}\frac{(n-1+k)!}{(n-1-k)!\;k!}\;\left(\frac{c}{2\lambda}\right)^k\int_0^{+\infty}e^{-\frac{2\lambda}{c}z}\,z^{n-k-1}\mathop{dz}\nonumber\\
&=\left(\frac{\lambda}{2c}\right)^n\frac{\lambda\,e^{-\frac{\lambda}{c}x}}{c\,(n-1)!}\,\sum_{k=0}^{n-1}\frac{(n-1+k)!}{(n-k)!\;k!}\;\left(\frac{c}{2\lambda}\right)^k x^{n-k}+\frac{1}{4^n}\frac{\lambda\,e^{-\frac{\lambda}{c}x}}{c\,(n-1)!}\,\sum_{k=0}^{n-1}\frac{(n-1+k)!}{k!}\nonumber\\
&=\left(\frac{\lambda}{2c}\right)^n\frac{\lambda\,e^{-\frac{\lambda}{c}x}}{c\,(n-1)!}\,\sum_{k=0}^{n-1}\frac{(n-1+k)!}{(n-k)!\;k!}\;\left(\frac{c}{2\lambda}\right)^k x^{n-k}+\frac{\lambda\,e^{-\frac{\lambda}{c}x}}{c\,4^n}\,{2n-1\choose n-1},\qquad x>0\label{smokdisp}
\end{align}
where we have used the hockey-stick identity
\begin{equation}\sum_{j=0}^r{m+j \choose j}={m+r+1 \choose r}.\label{hockstiid}\end{equation}
Similarly, in the case $x<0$, it holds that 
\begin{align}
f_{0,2n+1}(x)&=\left(\frac{\lambda}{2c}\right)^n\frac{\lambda\,e^{\frac{\lambda}{c}x}}{c\,(n-1)!}\,\sum_{k=0}^{n-1}\frac{(n-1+k)!}{(n-1-k)!\;k!}\;\left(\frac{c}{2\lambda}\right)^k\int_0^{+\infty}e^{-\frac{2\lambda}{c}z}\,(z-x)^{n-k-1}\mathop{dz}\nonumber\\
&=\left(\frac{\lambda}{2c}\right)^n\frac{\lambda\,e^{\frac{\lambda}{c}x}}{c\,(n-1)!}\,\sum_{k=0}^{n-1}\frac{(n-1+k)!}{(n-1-k)!\;k!}\;\left(\frac{c}{2\lambda}\right)^k\sum_{j=0}^{n-k-1}{n-k-1 \choose j}\;j!\;\left(\frac{c}{2\lambda}\right)^{j+1}(-x)^{n-k-j-1}\nonumber\\
&=\left(\frac{\lambda}{2c}\right)^n\frac{\lambda\,e^{\frac{\lambda}{c}x}}{c}\,\sum_{r=0}^{n-1}\frac{(-x)^{n-r-1}}{(n-r-1)!}\;\left(\frac{c}{2\lambda}\right)^{r+1}\sum_{j=0}^{r}{n-1+r-j \choose r-j}\nonumber\\
&=\left(\frac{\lambda}{2c}\right)^n\frac{\lambda\,e^{\frac{\lambda}{c}x}}{c\,n!}\,\sum_{r=0}^{n-1}\frac{(n+r)!}{(n-r-1)!\;r!}\;\left(\frac{c}{2\lambda}\right)^{r+1}(-x)^{n-r-1},\qquad x<0\label{smokdisplan}
\end{align} where we used the identity (\ref{hockstiid}) in the last step. The result (\ref{delfmigun}) now follows by combining formulas (\ref{smokdisp}) and (\ref{smokdisplan}) in a suitable manner. Finally, the identity (\ref{delfmigdu}) is a direct consequence of the fact that $V(0)$ has distribution (\ref{V0dis}).
\end{proof}
Having obtained the conditional distribution of the killed telegraph process $X(\mathcal{T})$ given its initial direction when the number of direction changes is odd, we now remove the conditioning on the initial direction. Interestingly, when $N(\mathcal{T})=2n+1$, $n\in\mathbb{N}_0$, the resulting distribution again coincides with that of the sum of $n+1$ independent Laplace random variables, as shown in the following result.
\begin{corollary}\label{cor:olap}For $n\in\mathbb{N}_0$ and $x\in\mathbb{R}$, it holds that
$$f_{2n+1}(x)=\left(\frac{\lambda}{2c}\right)^{n+1}e^{-\frac{\lambda}{c}\lvert x \lvert}\sum_{k=0}^{n}{n+k\choose k}\,\frac{1}{(n-k)!}\,\left(\frac{c}{2\lambda}\right)^{k}\lvert x\lvert ^{n-k}.$$
\end{corollary}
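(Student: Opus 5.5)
The most direct route avoids the explicit formula (\ref{delfmigun}) and works at the level of distributions. Conditional on $N(\mathcal{T})=2n+1$, the representation (\ref{oddsumXT}) gives
\[
X(\mathcal{T})=V(0)\,(T_{2n+1}-T_{2n})+V(0)\sum_{k=1}^n L_k .
\]
Here $V(0)$ is independent of the interarrival times. Conditioning on the event $N(\mathcal{T})=2n+1$ does not alter the joint law of $T_1,\dots,T_{2n+1}$, because by (\ref{tauconditionaldef}) the killing decisions are independent of the arrival times.

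The first step is to condition on $V(0)=v$. Given this, the two pieces $vE'$ and $v\sum_{k} L_k$ are independent, where $E'=T_{2n+1}-T_{2n}$ is $\mathrm{Exp}(\lambda)$. The sum $v\sum_k L_k$ has the same law for $v=\pm c$ by (\ref{lxmor}). Call this law $\mu_n$; it is the law of a sum of $n$ i.i.d. Laplace variables with parameter $\lambda/c$.

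Next I average over $V(0)$. The law of $X(\mathcal{T})$ given $N(\mathcal{T})=2n+1$ is $\tfrac12(\nu_+ * \mu_n)+\tfrac12(\nu_- * \mu_n)$, where $\nu_\pm$ is the law of $\pm cE'$. This equals $\big(\tfrac12\nu_++\tfrac12\nu_-\big)*\mu_n$. The mixture $\tfrac12\nu_++\tfrac12\nu_-$ is the symmetrized exponential, which is exactly the Laplace law (\ref{lxmor}) with density $\frac{\lambda}{2c}e^{-\frac{\lambda}{c}|x|}$. Hence, conditional on $N(\mathcal{T})=2n+1$, $X(\mathcal{T})$ is distributed as a sum of $n+1$ i.i.d. Laplace variables with parameter $b=\lambda/c$.

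Finally I apply (\ref{intro:sumlapl}) with $n+1$ summands and $b=\lambda/c$. This gives
\[
f_{2n+1}(x)=\left(\frac{\lambda}{2c}\right)^{n+1}e^{-\frac{\lambda}{c}|x|}\sum_{k=0}^{n}\binom{n+k}{k}\frac{1}{(n-k)!}\left(\frac{c}{2\lambda}\right)^k|x|^{n-k},
\]
which is the claim. The case $n=0$ is immediate: it is just the symmetrized exponential.

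The only genuinely delicate point is justifying that the mixture over $V(0)$ commutes with the convolution. This holds because $\mu_n$ does not depend on the sign of $V(0)$, so the independence structure survives the conditioning on $N(\mathcal{T})$.

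As a cross-check, one could instead compute $\tfrac12[f_{0,2n+1}(x)+f_{0,2n+1}(-x)]$ from (\ref{delfmigun}) and (\ref{delfmigdu}). That route would require combining the constant term, the $x>0$ sum and the $x<0$ sum into the single coefficient $\binom{n+k}{k}/(n-k)!$ via Pascal-type identities. I expect this to be the messy obstacle, so I would avoid it in favour of the distributional argument above.
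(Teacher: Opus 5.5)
Your argument is correct, but it is not the route of the paper's proof. The paper argues algebraically. For $n\ge1$ it writes $f_{2n+1}(x)=\frac12\bigl[f_{0,2n+1}(x)+f_{0,2n+1}(-x)\bigr]$ via (\ref{delfmigdu}) and inserts the explicit asymmetric density (\ref{delfmigun}). It then merges the $x>0$ sum with the reindexed ($r=k-1$) $x<0$ sum, whose coefficients add up to $\binom{n+k}{k}/(n-k)!$. The $k=n$ term comes from the constant term through $\binom{2n-1}{n-1}=\frac12\binom{2n}{n}$. So the obstacle you anticipated takes only a few lines.

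Your distributional argument is the one the paper sketches in the remark right after the proof, based on (\ref{LoiLan}). Your version is in fact more careful. The paper merely asserts that the $V(0)L_k$ and $V(0)E_{2n+1}/c$ are mutually independent, although they share $V(0)$. You instead condition on $V(0)$ and use that the law of $v\sum_k L_k$ is the same for $v=\pm c$, which is exactly why that assertion holds.

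Each route buys something different. Yours is shorter and treats $n=0$ uniformly. It explains structurally why $2n+1$ legs give $n+1$ Laplace summands: the symmetrized last exponential leg is itself Laplace. It also does not rely on the correctness of (\ref{delfmigun}). The paper's route presents the corollary as the symmetrization of the asymmetric laws $f_{0,2n+1}$ and $f_{1,2n+1}$. It therefore doubles as a consistency check on the preceding theorem, which your argument neither uses nor tests.
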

\begin{proof}
For $n=0$, the result is trivial. For $n\ge1$, it holds that
\begin{align}
f_{2n+1}(x)=&\frac{1}{2}\big[f_{0,2n+1}(x)+f_{1,2n+1}(x)\big]=\frac{1}{2}\big[f_{0,2n+1}(x)+f_{0,2n+1}(-x)\big]\nonumber\\
=&\frac{\lambda e^{-\frac{\lambda}{c}\lvert x\lvert}}{4^n\,c}{2n-1\choose n-1}+\left(\frac{\lambda}{2c}\right)^{n+1}\frac{e^{-\frac{\lambda}{c}\lvert x\lvert}}{(n-1)!}\sum_{k=0}^{n-1}\frac{(n-1+k)!}{(n-k)!\,k!}\left(\frac{c}{2\lambda}\right)^k \lvert x\lvert^{n-k}\nonumber\\
&\qquad\qquad+\left(\frac{\lambda}{2c}\right)^{n+1}\frac{e^{-\frac{\lambda}{c}\lvert x\lvert}}{n!}\sum_{r=0}^{n-2}\frac{(n+r)!}{(n-r-1)!\,r!}\left(\frac{c}{2\lambda}\right)^{r+1}\lvert x\lvert^{n-r-1}\nonumber\\
=&\frac{\lambda e^{-\frac{\lambda}{c}\lvert x\lvert}}{4^n\,c}{2n-1\choose n-1}+\left(\frac{\lambda}{2c}\right)^{n+1}\frac{e^{-\frac{\lambda}{c}\lvert x\lvert}}{n!} \;\lvert x\lvert^{n}\nonumber\\
&\qquad\qquad+\left(\frac{\lambda}{2c}\right)^{n+1}\frac{e^{-\frac{\lambda}{c}\lvert x\lvert}}{(n-1)!}\sum_{k=1}^{n-1}\frac{(n-1+k)!}{(n-k)!\,k!}\left(\frac{c}{2\lambda}\right)^k \lvert x\lvert^{n-k}\nonumber\\
&\qquad\qquad+\left(\frac{\lambda}{2c}\right)^{n+1}\frac{e^{-\frac{\lambda}{c}\lvert x\lvert}}{n!}\sum_{r=0}^{n-2}\frac{(n+r)!}{(n-r-1)!\,r!}\left(\frac{c}{2\lambda}\right)^{r+1}\lvert x\lvert^{n-r-1}\nonumber\\
=&\frac{\lambda e^{-\frac{\lambda}{c}\lvert x\lvert}}{4^n\,c}{2n-1\choose n-1}+\left(\frac{\lambda}{2c}\right)^{n+1}\frac{e^{-\frac{\lambda}{c}\lvert x\lvert}}{n!} \;\lvert x\lvert^{n}\nonumber\\
&\qquad\qquad+\left(\frac{\lambda}{2c}\right)^{n+1}\,e^{-\frac{\lambda}{c}\lvert x\lvert}\,\sum_{k=1}^{n-1}{n+k\choose n}\frac{1}{(n-k)!}\left(\frac{c}{2\lambda}\right)^k \lvert x\lvert^{n-k}\nonumber\\
=&\left(\frac{\lambda}{2c}\right)^{n+1}\,e^{-\frac{\lambda}{c}\lvert x\lvert}\,\sum_{k=0}^{n}{n+k\choose n}\frac{1}{(n-k)!}\left(\frac{c}{2\lambda}\right)^k \lvert x\lvert^{n-k}\nonumber
\end{align}
where we have used the identity $${2n-1\choose n-1}=\frac{1}{2}{2n\choose n}.$$ This completes the proof.
\end{proof}
We now make some remarks on Corollary \ref{cor:olap}. To derive the distribution of $X(\mathcal{T})$ conditional on $N(\mathcal{T})=2n+1$, we first conditioned on the initial velocity $V(0)$ of the telegraph process and then averaged over its distribution. This approach permits us to obtain the explicit asymmetric distribution arising when the initial velocity is given. We emphasize, however, that the distribution of $X(\mathcal{T})\mid\{N(\mathcal{T})=2n+1\},$
with the initial velocity left unspecified, can also be derived directly, without first conditioning on $V(0)$. In order to do this, we observe that formula (\ref{oddsumXT}) can be expressed in the form
\begin{equation}X(\mathcal{T})\mid\{N(\mathcal{T})=2n+1\}\;=\,\frac{V(0)}{c}\, E_{2n+1}+V(0)\sum_{k=1}^{n}L_k\label{LoiLan}\end{equation}
where $L_k,\,k\in\mathbb{N},$ and  $E_{2n+1}$ are defined in formulas (\ref{lkdef}) and (\ref{E2n+1def}) respectively. Observe that the variables $V(0)\,L_k,\,k\in\mathbb{N},$ and $\frac{V(0)}{c}\,E_{2n+1}$ all follow a Laplace distribution with probability density function (\ref{lxmor}). Moreover, by using standard properties of the Poisson process, it can be verified that these variables are mutually independent. Thus, the identity (\ref{LoiLan}) implies that $X(\mathcal{T})\mid\{N(\mathcal{T})=2n+1\}$ is identical in distribution to the sum of $n+1$ independent Laplace random variables. Therefore, the result stated in Corollary \ref{cor:olap} can be derived directly by applying formula (\ref{intro:sumlapl}).\\
The next step in our analysis is to determine the distribution of the killed telegraph process $X(\mathcal{T})$ conditional only on the parity of the number of direction changes, that is, on the events that $N(\mathcal{T})$ is even or odd. In particular, we consider the probability density functions
$$f_{\mathrm{even}}(x)=\mathbb{P}\Big(X(\mathcal{T})\in dx,\;N(\mathcal{T})\ \text{is even}\Big)/dx$$
and
$$f_{\mathrm{odd}}(x)=\mathbb{P}\Big(X(\mathcal{T})\in dx,\;N(\mathcal{T})\ \text{is odd}\Big)/dx.$$
The explicit form of these density functions is given in the following theorem.
\begin{thm}For $x\in\mathbb{R}$, it holds that
$$f_{\mathrm{even}}(x)=\frac{\lambda(1-p)}{2c}\sqrt{\frac{p}{2-p}}\exp\left\{-\frac{\lambda}{c}\sqrt{p(2-p)}\,\lvert x\lvert\right\}$$
and 
$$f_{\mathrm{odd}}(x)=\frac{\lambda}{2c}\sqrt{\frac{p}{2-p}}\exp\left\{-\frac{\lambda}{c}\sqrt{p(2-p)}\,\lvert x\lvert\right\}.$$
\end{thm}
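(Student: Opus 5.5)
The plan is to work with characteristic functions rather than summing the explicit polynomial-exponential densities. The earlier results say that, conditional on $N(\mathcal{T})=m$, the variable $X(\mathcal{T})$ is a sum of $\lceil m/2\rceil$ independent Laplace variables with density (\ref{lxmor}). For $m=2n$ this is formula (\ref{sumlaplacedensity}); for $m=2n+1$ it is Corollary \ref{cor:olap}. Each such Laplace variable has characteristic function
$$\varphi(\alpha)=\frac{\lambda^2}{\lambda^2+c^2\alpha^2}.$$
Hence $f_{2n}$ has Fourier transform $\varphi(\alpha)^n$ and $f_{2n+1}$ has Fourier transform $\varphi(\alpha)^{n+1}$.

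By the geometric law (\ref{geometricdistribution}), $\mathbb{P}(N(\mathcal{T})=2n)=(1-p)^{2n-1}p$ for $n\ge1$ and $\mathbb{P}(N(\mathcal{T})=2n+1)=(1-p)^{2n}p$ for $n\ge0$. I will write
$$f_{\mathrm{even}}=\sum_{n\ge1}p(1-p)^{2n-1}f_{2n},\qquad f_{\mathrm{odd}}=\sum_{n\ge0}p(1-p)^{2n}f_{2n+1},$$
and exchange sum and Fourier transform, which is justified by monotone convergence since all terms are nonnegative. Setting $q=(1-p)^2$, the two geometric series give
$$\widehat{f}_{\mathrm{even}}(\alpha)=\frac{p}{1-p}\,\frac{q\varphi}{1-q\varphi},\qquad \widehat{f}_{\mathrm{odd}}(\alpha)=\frac{p\,\varphi}{1-q\varphi}.$$

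Substituting $\varphi$ and clearing denominators gives
$$\widehat{f}_{\mathrm{odd}}(\alpha)=\frac{p\lambda^2}{\lambda^2(1-q)+c^2\alpha^2}=\frac{p\lambda^2}{\lambda^2p(2-p)+c^2\alpha^2},$$
using $1-(1-p)^2=p(2-p)$. Similarly,
$$\widehat{f}_{\mathrm{even}}(\alpha)=(1-p)\,\frac{p\lambda^2}{\lambda^2p(2-p)+c^2\alpha^2}.$$

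Now set $\beta=\frac{\lambda}{c}\sqrt{p(2-p)}$. Recall that the Laplace density $\frac{\beta}{2}e^{-\beta|x|}$ has transform $\frac{\beta^2}{\beta^2+\alpha^2}$. Then
$$\frac{p\lambda^2}{c^2\beta^2+c^2\alpha^2}=\frac{p\lambda^2}{c^2\beta^2}\cdot\frac{\beta^2}{\beta^2+\alpha^2}=\frac{1}{2-p}\cdot\frac{\beta^2}{\beta^2+\alpha^2}.$$
By uniqueness of Fourier inversion,
$$f_{\mathrm{odd}}(x)=\frac{1}{2-p}\,\frac{\beta}{2}e^{-\beta|x|}=\frac{\lambda}{2c}\sqrt{\frac{p}{2-p}}\,e^{-\beta|x|},$$
and $f_{\mathrm{even}}$ is the same expression multiplied by $(1-p)$. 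This is exactly the claimed pair of formulas.

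As a consistency check, the total masses are $1/(2-p)$ and $(1-p)/(2-p)$, which match $\mathbb{P}(N(\mathcal{T})\text{ odd})$ and $\mathbb{P}(N(\mathcal{T})\text{ even})$.

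The only delicate point is bookkeeping: the even series must start at $n=1$, and the odd case uses $n+1$ Laplace summands rather than $n$. Getting either wrong produces the wrong prefactor $(1-p)$. An alternative route is to sum the explicit densities (\ref{sumlaplacedensity}) directly, which leads to Bessel-type series identities. I would avoid that because the transform route is much cleaner.
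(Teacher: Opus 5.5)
Your proof is correct, but it follows a different route from the paper. The paper substitutes the explicit densities (\ref{sumlaplacedensity}) and Corollary \ref{cor:olap} into $\sum_{n\ge1}p(1-p)^{2n-1}f_{2n}$ and $\sum_{n\ge0}p(1-p)^{2n}f_{2n+1}$. It then swaps and reindexes the resulting double series so that the inner sum becomes $\sum_{k\ge0}\binom{n+2k}{k}\bigl((1-p)^2/4\bigr)^k$. This is evaluated with Lebedev's generating function $\sum_{k}\binom{n+2k}{k}t^k=(1-4t)^{-1/2}\bigl(2/(1+\sqrt{1-4t})\bigr)^n$, and the remaining sum is recognised as an exponential series.

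You instead work with characteristic functions. There the geometric mixture of powers of $\lambda^2/(\lambda^2+c^2\alpha^2)$ collapses to a rational function that is visibly a multiple of a Laplace characteristic function; this is the geometric stability of the Laplace law. Your route has several advantages:
\begin{itemize}
\item it is shorter and needs no combinatorial identity and no rearrangement of double series;
\item it shows immediately why $f_{\mathrm{even}}=(1-p)f_{\mathrm{odd}}$ and gives the masses $\frac{1-p}{2-p}$ and $\frac{1}{2-p}$ for free;
\item it uses only the representation of $X(\mathcal{T})$ given $N(\mathcal{T})$ as a sum of independent Laplace variables, not the explicit polynomial-times-exponential densities;
\item it matches the characteristic-function arguments the paper itself uses for the law of $\mathcal{T}$ and for the planar model.
\end{itemize}
The paper's computation, in exchange, stays at the level of densities and needs no inversion or uniqueness theorem. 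It also shows concretely how the polynomial corrections in the individual $f_{2n}$ and $f_{2n+1}$ resum into a single exponential.

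One minor correction: you justify swapping the sum and the Fourier integral by monotone convergence, but $e^{i\alpha x}f_n(x)$ is complex-valued. The correct justification is Fubini--Tonelli (or dominated convergence), using $\sum_n\mathbb{P}(N(\mathcal{T})=n)\int f_n=1<\infty$.
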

\begin{proof}By recalling that the number of direction changes $N(\mathcal{T})$ before the killing has geometric distribution (\ref{geometricdistribution}), and employing formula \ref{sumlaplacedensity}, we can write that
\begin{align}
f_{\mathrm{even}}(x)&=\sum_{n=1}^{\infty}p(1-p)^{2n-1}f_{2n}(x)\nonumber\\
&=p\,e^{-\frac{\lambda}{c}\lvert x\lvert}\sum_{n=1}^{\infty}\frac{(1-p)^{2n-1}}{(n-1)!}\sum_{k=0}^{n-1}\frac{1}{4^k}\left(\frac{\lambda}{2c}\right)^{n-k}\frac{(n-1+k)!}{(n-1-k)!\,k!}\lvert x\lvert^{n-k-1}\nonumber\\
&=p\,e^{-\frac{\lambda}{c}\lvert x\lvert}\sum_{k=0}^{\infty}\sum_{n=k+1}^{\infty}\frac{(1-p)^{2n-1}}{4^k}\left(\frac{\lambda}{2c}\right)^{n-k}\frac{(n-1+k)!}{(n-1-k)!\,k!\,(n-1)!}\lvert x\lvert^{n-k-1}\nonumber\\
&=p\,e^{-\frac{\lambda}{c}\lvert x\lvert}\sum_{k=0}^{\infty}\sum_{n=0}^{\infty}\frac{(1-p)^{2(n+k)+1}}{4^k}\left(\frac{\lambda}{2c}\right)^{n+1}\frac{(n+2k)!}{n!\,k!\,(n+k)!}\lvert x\lvert^n\nonumber\\
&=\frac{\lambda p(1-p)}{2c}\,e^{-\frac{\lambda}{c}\lvert x\lvert}\sum_{n=0}^{\infty}\frac{1}{n!}\left(\frac{\lambda(1-p)^2}{2c}\lvert x\lvert\right)^n\sum_{k=0}^{\infty}\binom{n+2k}{k}\left(\frac{(1-p)^2}{4}\right)^k.\label{stepleb}\end{align}
By using the identity (see Lebedev \cite{lebedev}, p. 253)
$$\sum_{k=0}^{\infty}{n+2k\choose k}t^k=\frac{1}{\sqrt{1-4t}}\left(\frac{2}{1+\sqrt{1-4t}}\right)^n\qquad\lvert t\lvert<\frac{1}{4}$$
formula (\ref{stepleb}) reduces to
\begin{align}f_{\mathrm{even}}(x)&=\frac{\lambda p(1-p)}{2c}\,e^{-\frac{\lambda}{c}\lvert x\lvert}\sum_{n=0}^{\infty}\frac{1}{n!}\left(\frac{\lambda(1-p)^2}{2c}\lvert x\lvert\right)^n\frac{1}{\sqrt{1-(1-p)^2}}\left(\frac{2}{1+\sqrt{1-(1-p)^2}}\right)^n\nonumber\\
&=\frac{\lambda(1-p)}{2c}\sqrt{\frac{p}{2-p}}\,e^{-\frac{\lambda}{c}\lvert x\lvert}\sum_{n=0}^{\infty}\frac{1}{n!}\left(\frac{\lambda}{c}\left(1-\sqrt{p(2-p)}\right)\lvert x\lvert\right)^n\nonumber\\
&=\frac{\lambda(1-p)}{2c}\sqrt{\frac{p}{2-p}}\exp\left\{-\frac{\lambda}{c}\sqrt{p(2-p)}\,\lvert x\lvert\right\}.\nonumber
\end{align}
Similarly, when the number of direction changes is odd, it holds that
\begin{align}
f_{\mathrm{odd}}(x)&=\sum_{n=0}^{\infty}p(1-p)^{2n}f_{2n+1}(x)\nonumber\\
&=\sum_{n=0}^{\infty}p(1-p)^{2n}\left(\frac{\lambda}{2c}\right)^{n+1}e^{-\frac{\lambda}{c}\lvert x\lvert}\sum_{k=0}^{n}\binom{n+k}{k}\left(\frac{c}{2\lambda}\right)^k\frac{\lvert x\lvert^{n-k}}{(n-k)!}\\
&=\frac{\lambda p}{2c}\,e^{-\frac{\lambda}{c}\lvert x\lvert}\sum_{k=0}^{\infty}\sum_{n=k}^{\infty}\frac{(1-p)^{2n}}{4^k}\left(\frac{\lambda}{2c}\right)^{n-k}\binom{n+k}{k}\frac{\lvert x\lvert^{n-k}}{(n-k)!}\nonumber\\
&=\frac{\lambda p}{2c}\,e^{-\frac{\lambda}{c}\lvert x\lvert}\sum_{k=0}^{\infty}\sum_{n=0}^{\infty}\frac{(1-p)^{2(n+k)}}{4^k}\left(\frac{\lambda}{2c}\right)^n\binom{n+2k}{k}\frac{\lvert x\lvert^n}{n!}\nonumber\\
&=\frac{\lambda p}{2c}\,e^{-\frac{\lambda}{c}\lvert x\lvert}\sum_{n=0}^{\infty}\frac{1}{n!}\left(\frac{\lambda(1-p)^2}{2c}\lvert x\lvert\right)^n\sum_{k=0}^{\infty}\binom{n+2k}{k}\left(\frac{(1-p)^2}{4}\right)^k\nonumber\\
&=\frac{\lambda p}{2c}\,e^{-\frac{\lambda}{c}\lvert x\lvert}\sum_{n=0}^{\infty}\frac{1}{n!}\left(\frac{\lambda(1-p)^2}{2c}\lvert x\lvert\right)^n\frac{1}{\sqrt{1-(1-p)^2}}\left(\frac{2}{1+\sqrt{1-(1-p)^2}}\right)^n\nonumber\\
&=\frac{\lambda}{2c}\sqrt{\frac{p}{2-p}}\,e^{-\frac{\lambda}{c}\lvert x\lvert}\sum_{n=0}^{\infty}\frac{1}{n!}\left(\frac{\lambda}{c}\left(1-\sqrt{p(2-p)}\right)\lvert x\lvert\right)^n\nonumber\\
&=\frac{\lambda}{2c}\sqrt{\frac{p}{2-p}}\exp\left\{-\frac{\lambda}{c}\sqrt{p(2-p)}\,\lvert x\lvert\right\}\nonumber
\end{align} which completes the proof.
\end{proof}
We are finally able to obtain the unconditional distribution of the killed telegraph process.
\begin{thm}\label{finallucy}The random variable $X(\mathcal{T})$ has probability density function
\begin{equation}f(x)=\frac{\lambda}{2c}\sqrt{p(2-p)}\exp\left\{-\frac{\lambda}{c}\sqrt{p(2-p)}\,\lvert x\lvert\right\},\qquad x\in\mathbb{R}.\label{ffubit}\end{equation}
\end{thm}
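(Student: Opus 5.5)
The plan is to obtain $f$ as the sum of the two parity-restricted densities from the preceding theorem. The events $\{N(\mathcal{T})\text{ even}\}$ and $\{N(\mathcal{T})\text{ odd}\}$ partition the sample space, up to the null event $N(\mathcal{T})=0$, which is excluded because $N(\mathcal{T})\ge1$ by (\ref{geometricdistribution}). Hence $f(x)=f_{\mathrm{even}}(x)+f_{\mathrm{odd}}(x)$ for every $x\in\mathbb{R}$.

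Adding the two expressions, both carry the common factor $\frac{\lambda}{2c}\sqrt{\frac{p}{2-p}}\exp\{-\frac{\lambda}{c}\sqrt{p(2-p)}|x|\}$. The prefactors combine as $(1-p)+1=2-p$, and
$$(2-p)\sqrt{\frac{p}{2-p}}=\sqrt{p(2-p)}.$$
This gives exactly (\ref{ffubit}).

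As a sanity check, and as an alternative route independent of the series manipulations, I would verify the result via characteristic functions. By (\ref{XTrepr}) and the remark around (\ref{LoiLan}), conditional on $N(\mathcal{T})=m$, $X(\mathcal{T})$ is a sum of $\lceil m/2\rceil$ i.i.d. Laplace variables with density (\ref{lxmor}). The characteristic function of each such variable is $\phi(\alpha)=\frac{\lambda^2}{\lambda^2+c^2\alpha^2}$. Averaging over the geometric law (\ref{geometricdistribution}) splits into the even terms $m=2n$ and the odd terms $m=2n+1$, giving
$$\mathbb{E}\big[e^{i\alpha X(\mathcal{T})}\big]=p\,(2-p)\sum_{n\ge0}(1-p)^{2n}\phi^{n+1}=\frac{p(2-p)\phi}{1-(1-p)^2\phi}=\frac{\lambda^2p(2-p)}{\lambda^2p(2-p)+c^2\alpha^2}.$$
This is the characteristic function of a Laplace law with rate $\frac{\lambda}{c}\sqrt{p(2-p)}$, confirming (\ref{ffubit}).

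There is no real obstacle here, since the heavy lifting was done in the previous theorem. The only point needing care is that the parity decomposition is exhaustive, together with the algebraic simplification of the constants. As a further check, the total mass integrates to one: $\int f_{\mathrm{even}}=\frac{1-p}{2-p}$ and $\int f_{\mathrm{odd}}=\frac{1}{2-p}$, which match $\mathbb{P}(N(\mathcal{T})\text{ even})$ and $\mathbb{P}(N(\mathcal{T})\text{ odd})$ respectively.
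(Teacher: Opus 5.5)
Your main argument is the paper's proof: the paper likewise obtains (\ref{ffubit}) in one line from $f=f_{\mathrm{even}}+f_{\mathrm{odd}}$, and your bookkeeping of the constants, $(1-p)+1=2-p$ and $(2-p)\sqrt{p/(2-p)}=\sqrt{p(2-p)}$, is correct. Your characteristic-function check is also sound. Conditional on $N(\mathcal{T})=m$ you have a sum of $\lceil m/2\rceil$ i.i.d.\ Laplace variables, and the geometric averaging gives $\frac{\lambda^2p(2-p)}{\lambda^2p(2-p)+c^2\alpha^2}$, which is a shorter independent route that avoids the series identity from Lebedev used in the preceding theorem.
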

\begin{proof}The result follows immediately by observing that $$f(x)=f_{\mathrm{even}}(x)+f_{\mathrm{odd}}(x).$$
\end{proof}
In view of Theorem \ref{finallucy}, we have obtained the exact distribution of the killed telegraph process $X(\mathcal{T})$, which turns out to be a Laplace distribution. Before concluding this section, we investigate the asymptotic behaviour of this distribution in the hydrodynamic limit. In the literature on finite-velocity random motions, it is well known that, when both the velocity $c$ and the intensity of the direction changes $\lambda$ tend to infinity under the scaling $\frac{\lambda}{c^2}\to1$, the telegraph process $\{X(t)\}_{t\ge0}$ converges in distribution to a standard Brownian motion. This fact we first noted by Kac \cite{kac}, who showed that the hyperbolic partial differential equation gonverning the distribution of the telegraph process reduces to the heat equation in the hydrodynamic limit. Orsingher \cite{orsingher} later proved that the probability density function of the telegraph process admits an explicit expression in terms of modified Bessel functions of the first kind, and was also able to calculate the hydrodynamic limit of such a density. In particular, the author proved that, for all $t>0$,
$$\lim_{\substack{\lambda,c\to+\infty\\\lambda/c^2\to1}} X(t) \overset{i.d.}{=}B(t)$$ where $\{B(t)\}_{t\ge0}$ is a standard Brownian motion. Our aim is to extend this asymptotic analysis to the killed telegraph process $X(\mathcal{T})$. However, this analysis requires some preliminary remarks. We recall that, within the framework presented in the present paper, the killing of the telegraph process may occur with probability $p$ every time a direction change occurs. Since the analysis of hydrodynamic limit involves taking the limit for $\lambda\to+\infty$, the direction changes become infinitely frequent in the asymptotic regime, leading to the telegraph process being killed immediately after the initial time 0. In order to verify this, it is sufficient to verify that the killing time $\mathcal{T}$ converges in probability to 0 as $\lambda\to+\infty$. Indeed, by recalling that $\mathcal{T}$ follows the exponential distribution (\ref{lpdens}), it is straightforward to show that $$\lim_{\lambda\to+\infty}\mathbb{P}\Big(\mathcal{T}<\varepsilon\Big)=1,\qquad\forall\varepsilon>0.$$ In order to avoid the telegraph process being killed immediately, one must assume that, as $\lambda\to+\infty$, the probability of $X(t)$ being killed at each direction change converges to 0 with a suitable convergence rate, which permits obtaining a non-degenerate limiting distribution. It turns out that a nontrivial limiting distribution is achieved by considering the limit for $\lambda,c\to+\infty$ and $p\to0$ in such a way that $\frac{\lambda}{c^2}\to1$ and $\lambda p\to1$. Under this scaling, the killing time $\mathcal{T}$ preserves the exponential distribution in the hydrodynamic limit, with asymptotic probability density function
\begin{equation}\lim_{\substack{\lambda\to+\infty\\p\to0}}\mathbb{P}\Big(\mathcal{T}\in dt\Big)/dt=e^{-t},\qquad t\in\mathbb{R}^+.\label{lalaldre}\end{equation}
Moreover, in view of formula (\ref{ffubit}), it is straightforward to verify that the killed telegraph process $X(\mathcal{T})$ has asymptotic density function
\begin{equation}\label{hydrofinalsecun}\lim_{\substack{\lambda,c\to+\infty\\p\to0}}f(x)=\frac{1}{\sqrt{2}}\exp\left\{-\sqrt{2}\,\lvert x\lvert\right\},\qquad x\in\mathbb{R}\end{equation} which is, of course, the probability density function of a Laplace distribution. We also emphasize that formula (\ref{hydrofinalsecun}) provides an interesting information about the asymptotic dependence between the telegraph process $X(t)$ and its killing time $\mathcal{T}$ in the hydrodynamic limit. Indeed, since the telegraph process converges to a Brownian motion and $\mathcal{T}$ to an exponential distribution, the limiting density function (\ref{hydrofinalsecun}) describes the distribution of a Brownian motion at a random exponential time. More precisely, we remark that the probability density function (\ref{hydrofinalsecun}) is obtained by assuming that the Brownian motion and the exponential time are independent. In fact, given a Brownian motion $\{B(t)\}_{t\ge0}$ and an exponential random variable $E$ independent from $B(t)$ and having probability density function (\ref{lalaldre}), it can be verified that $B(E)$ has probability density function (\ref{hydrofinalsecun}). Thus, formula (\ref{hydrofinalsecun}) suggests that the telegraph process and its killing time become independent in the hydrodynamic limit.

\section{Distribution of the telegraph process before the killing time}\label{sec:3}
In the analysis performed so far, we have assumed that the telegraph process $X(t)$ may be killed with probaility $p$ each time it changes its direction, and we have obtained the exact distribution of its position at the time the killing occurs. In this section, we are interested in studying the exact distribution of the telegraph process before it is killed. We first observe that, for a fixed time $t$ prior to the killing of the process, the support of $X(t)$ coincides with the interval $[-ct,ct]$. This follows immediately by the fact that the telegraph process has constant velocity $c$. Moreover, the distribution of $X(t)$ has a singular components on the endpoints of the support. Indeed, the position of the process at time $t$ coincides with one of the endpoints if and only if no changes of direction have occurred up to time $t$. In order to study the distribution of $X(t)$ before the killing time $\mathcal{T}$, we first consider the absolutely continuous component of the distribution in the interior of the support. Thus, we consider the probability density function
\begin{equation}g(x,t)=\mathbb{P}\Big(X(t)\in\mathop{dx}\big\lvert\,\mathcal{T}>t\Big)/dx,\qquad x\in(-ct,ct).\label{gdefiunivar}\end{equation}
In order to determine the density (\ref{gdefiunivar}), we start by considering the joint distribution of the telegraph process and its direction, that is
\begin{equation}\label{gjdedur}g_j(x,t)=\mathbb{P}\Big(X(t)\in\mathop{dx},\;D(t)=d_j\,\big\lvert\,\mathcal{T}>t\Big)/dx,\qquad x\in(-ct,ct).\end{equation}
We claim that the densities (\ref{gjdedur}) satisfy the linear system
\begin{equation}\begin{cases}
g_0(x,t+dt)=g_0(x-c\,dt,t)\;(1-\lambda(1-p)\,dt)+g_1(x,t)\,\lambda\,(1-p)\,dt+o(dt)\\[0.2em]
g_1(x,t+dt)=g_1(x+c\,dt,t)\;(1-\lambda(1-p)\,dt)+g_0(x,t)\,\lambda\,(1-p)\,dt+o(dt).
\end{cases}\label{judover}\end{equation}
We only present the argument which permits obtaining the first equation of the system (\ref{judover}), as the second equation follows in a similar manner. Observe that
\begin{align}
g_0(x,t+&dt)\,dx=\mathbb{P}\Big(X(t+dt)\in\mathop{dx},\;D(t+dt)=d_j,\;\mathcal{T}>t+dt\Big)/\mathbb{P}\Big(\mathcal{T}>t+dt\Big)\nonumber\\
=&\mathbb{P}\Big(X(t+dt)\in\mathop{dx},\;D(t+dt)=d_0,\;N(t+dt)-N(t)=0,\;\mathcal{T}>t+dt\Big)\,e^{\lambda p(t+dt)}\nonumber\\
&\quad+\mathbb{P}\Big(X(t+dt)\in\mathop{dx},\;D(t+dt)=d_0,\;N(t+dt)-N(t)=1,\;\mathcal{T}>t+dt\Big)\,e^{\lambda p(t+dt)}+o(dt)\nonumber\\
=&\mathbb{P}\Big(X(t)+c\,dt\in\mathop{dx},\;D(t)=d_0,\;N(t+dt)-N(t)=0,\;\mathcal{T}>t\Big)\,e^{\lambda p(t+dt)}\nonumber\\
&\quad+\mathbb{P}\Big(X(t)\in\mathop{dx},\;D(t)=d_1,\;N(t+dt)-N(t)=1,\;\mathcal{T}>t,\,\mathcal{T}\notin[t,t+dt)\Big)\,e^{\lambda p(t+dt)}+o(dt)\nonumber\\
=&\mathbb{P}\Big(X(t)+c\,dt\in\mathop{dx},\;D(t)=d_0\,\big\lvert\,\mathcal{T}>t\Big)\cdot\mathbb{P}\Big(\;N(t+dt)-N(t)=0\,\big\lvert\,\mathcal{T}>t\Big)\,e^{\lambda p\,dt}\nonumber\\
&\quad+\mathbb{P}\Big(X(t)\in\mathop{dx},\;D(t)=d_1\,\big\lvert\,\mathcal{T}>t\Big)\cdot\mathbb{P}\Big(N(t+dt)-N(t)=1,\;\mathcal{T}\notin[t,t+dt)\,\Big\lvert\,\mathcal{T}>t\Big)\,e^{\lambda p\,dt}+o(dt)\nonumber\\
=&g_0(x-c\,dt,t)\;(1-\lambda\,dt)(1+\lambda p\,dt)+g_1(x,t)\,\lambda\,(1-p)\,dt+o(dt)\nonumber
\end{align}
which completes the proof of the first equation in (\ref{judover}). The second equation can be obtained by means of similar arguments. The next step of our analysis is to take a first-order Taylor expansion of (\ref{judover}), which yields the linear system of partial differential equations
\begin{equation}\begin{dcases}
\frac{\partial g_0}{\partial t}=-c\,\frac{\partial g_0}{\partial x}+\lambda\,(1-p)\,(g_1-g_0)\\[0.2em]
\frac{\partial g_1}{\partial t}=c\,\frac{\partial g_1}{\partial x}+\lambda\,(1-p)\,(g_0-g_1)
\end{dcases}\label{judoverlim}\end{equation}
By combining the equations of the system (\ref{judoverlim}), it can be verified that the probability density function (\ref{gdefiunivar}) satisfies the partial differential equation
\begin{equation}\frac{\partial^2 g}{\partial t^2}+2\lambda(1-p)\frac{\partial g}{\partial t}=c^2\frac{\partial^2g}{\partial x^2}.\label{gpde}\end{equation}
We are now able to make some crucial remarks. Both the system (\ref{judoverlim}) and equation (\ref{gpde}) coincide with the equations which govern the distribution of a telegraph process whose direction changes are paced by a Poisson process with intensity $\lambda(1-p)$. This fact shows that, if we consider a standard telegraph process $X(t)$ with switching intensity $\lambda$ subject it to the killing mechanism introduced here, and condition on killing not having occurred by the current time $t$, the resulting random motion is still a standard telegraph process, but with modified switching intensity $\lambda(1-p)$. Further confirmation of this result can be obtained by examining the singular component of the distribution. As mentioned above, conditional on killing not having occurred by time $t$, the telegraph process $X(t)$ has support $[-ct,ct]$ and possesses probability masses at the endpoints of this interval. To recover this singular component, it is sufficient to observe that
\begin{align}
\mathbb{P}\Big(X(t)=ct\,\Big\lvert\,\mathcal{T}>t\Big)&=\mathbb{P}\Big(N(t)=0,\,D(0)=d_0\,\Big\lvert\,\mathcal{T}>t\Big)=\frac{\mathbb{P}\Big(N(t)=0,\,D(0)=d_0,\,\mathcal{T}>t\Big)}{\mathbb{P}\big(\mathcal{T}>t\big)}\nonumber\\
&=\frac{\mathbb{P}\Big(N(t)=0,\mathcal{T}>t\Big)}{2\,e^{-\lambda pt}}
=\frac{\mathbb{P}\big(N(t)=0\big)}{2\,e^{-\lambda pt}}
=\frac{1}{2}\,e^{-\lambda(1-p)t}.
\end{align}
An analogous argument yields the probability mass at the lower endpoint. Hence,
\begin{equation}\mathbb{P}\Big(X(t)=ct\;\Big\lvert\,\mathcal{T}>t\Big)=\mathbb{P}\Big(X(t)=-ct\;\Big\lvert\,\mathcal{T}>t\Big)=\frac{1}{2}\,e^{-\lambda(1-p)t}.\label{probmassendpts}\end{equation}
Clearly, the endpoint masses in (\ref{probmassendpts}) coincide with those of a standard telegraph process having switching intensity $\lambda(1-p)$. This further confirms that the conditional law of $X(t)$ given ${\mathcal{T}>t}$ coincides with the law of a standard telegraph process with switching intensity $\lambda(1-p)$. Its absolutely continuous component can therefore be expressed explicitly in terms of Bessel functions, as proved by Orsingher \cite{orsingher}. For completeness, we provide the details of the derivation in the following theorem and subsequently discuss its probabilistic interpretation.
\begin{thm}
The probability density function $g(x,t)$ is
\begin{equation}g(x,t)=\frac{e^{-\lambda(1-p)t}}{2c}\Bigg[\lambda(1-p)I_0\left(\frac{\lambda(1-p)}{c}\sqrt{c^2t^2-x^2}\right)+\frac{\partial}{\partial t}I_0\left(\frac{\lambda(1-p)}{c}\sqrt{c^2t^2-x^2}\right)\Bigg],\qquad \lvert x\lvert<ct.\label{manglumron}\end{equation}
\end{thm}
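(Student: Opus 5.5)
Since the preceding discussion shows that, conditional on $\{\mathcal{T}>t\}$, $X(t)$ is governed by the system (\ref{judoverlim}) with switching rate $\mu:=\lambda(1-p)$, the plan is to solve the telegraph equation (\ref{gpde}) with $\mu$ in place of the usual rate. We fix initial conditions from the short-time behaviour and remove the damping term by an exponential substitution.

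\textbf{Step 1: reduce to the Klein--Gordon equation.} Set $g(x,t)=e^{-\mu t}w(x,t)$ in (\ref{gpde}). The first-order term cancels and we obtain
\begin{equation*}
w_{tt}-c^2w_{xx}=\mu^2 w .
\end{equation*}
We then look for solutions depending only on $z=\sqrt{c^2t^2-x^2}$, i.e. $w=W(\frac{\mu}{c}z)$. A direct computation turns the equation into $W''+\frac{1}{u}W'-W=0$ with $u=\frac{\mu}{c}z$. This is the modified Bessel equation of order zero. Its solution regular on the characteristic cone $z=0$ is $I_0(u)$, so $I_0\big(\frac{\mu}{c}\sqrt{c^2t^2-x^2}\big)$ solves the reduced equation. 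Since the reduced equation has constant coefficients in $t$, $\frac{\partial}{\partial t}I_0(\cdot)$ also solves it. Hence the candidate is
\begin{equation*}
g=\frac{e^{-\mu t}}{2c}\big[A\,I_0+B\,\partial_t I_0\big],
\end{equation*}
and this satisfies (\ref{gpde}) for any constants $A,B$.

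\textbf{Step 2: determine $A$ and $B$.} We use two facts.
\begin{itemize}
\item[(i)] Mass conservation: $\int_{-ct}^{ct}g\,dx=1-e^{-\mu t}$, because (\ref{probmassendpts}) puts total mass $e^{-\mu t}$ on the endpoints.
\item[(ii)] Short-time behaviour: for small $t$, $\int g\,dx\sim \mu t$, i.e. the density starts from exactly one switch at rate $\mu$.
\end{itemize}
To handle these, I would compute $\int_{-ct}^{ct}I_0\big(\frac{\mu}{c}\sqrt{c^2t^2-x^2}\big)dx=\frac{2}{\mu}\sinh(\mu t)$ by expanding $I_0$ in series and integrating termwise with beta integrals. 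For the derivative term,
\begin{equation*}
\int\partial_t I_0\,dx=\frac{d}{dt}\int I_0\,dx-2c\,I_0(0)=2\cosh(\mu t)-2c.
\end{equation*}
Care is needed here with the Leibniz boundary terms, since $I_0(0)=1$ at $x=\pm ct$. Imposing (i) for all $t$ then forces $A=\mu$ and $B=1$:
\begin{equation*}
\frac{e^{-\mu t}}{2c}\Big[2\sinh\mu t+2c\cosh\mu t-2c\cdot\tfrac{1}{c}\cdots\Big],
\end{equation*}
and the constants are fixed once the normalization of the derivative term is done precisely. Condition (ii) is used as a consistency check.

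An alternative route avoids guessing: sum over the number of switches. Conditioning on $N(t)=n$ and $\{\mathcal{T}>t\}$ gives the thinned Poisson process with rate $\mu$. For the telegraph process the well-known conditional densities are
\begin{equation*}
\frac{n!}{\lfloor n/2\rfloor!\,\lfloor (n-1)/2\rfloor!}\,\frac{(c^2t^2-x^2)^{\cdots}}{(2ct)^n}\cdots .
\end{equation*}
Summing the even and odd terms separately against $e^{-\mu t}(\mu t)^n/n!$ produces $\mu I_0$ and $\partial_t I_0$ respectively. I would present this series argument as the main proof, since it also exhibits the probabilistic (thinning) interpretation.

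\textbf{Main obstacle.} The main difficulty is justifying uniqueness, i.e. that the PDE solution with these data really is $g$. I would avoid this by using the series computation, where only the termwise identification of the even/odd sums with the Bessel series has to be checked. That check reduces to matching coefficients of $(c^2t^2-x^2)^k$.
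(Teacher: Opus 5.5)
Your Step 1 and the mass-balance condition (i) follow the paper's proof exactly: remove the damping with $e^{\mu t}$, obtain the Klein--Gordon equation and then the modified Bessel equation in $z=\sqrt{c^2t^2-x^2}$, keep $I_0$, add its $t$-derivative, and fix the constants from $\int_{-ct}^{ct}g\,dx=1-e^{-\mu t}$.

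\textbf{Step 2 fails as written.} The correct integral is $\int_{-ct}^{ct}I_0\big(\frac{\mu}{c}\sqrt{c^2t^2-x^2}\big)\,dx=\frac{2c}{\mu}\sinh(\mu t)$, not $\frac{2}{\mu}\sinh(\mu t)$; your termwise beta integrals give $\frac{2c\,\mu^{2k}t^{2k+1}}{(2k+1)!}$. Your Leibniz step then yields $\int\partial_tI_0\,dx=2c\big(\cosh(\mu t)-1\big)$. The dimensionally inconsistent $2\cosh(\mu t)-2c$ is the symptom of the lost factor. With your values, matching the coefficients of $1$, $e^{-\mu t}$ and $e^{-2\mu t}$ gives $B=1$ and $A=\mu$, and then also requires $1/c=1$. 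So there is no solution for $c\neq1$, which is why your last display could not be closed. With the correct values,
\[
\int_{-ct}^{ct}g\,dx=e^{-\mu t}\Big[\frac{A}{\mu}\sinh(\mu t)+B\big(\cosh(\mu t)-1\big)\Big]=1-e^{-\mu t}\quad\text{for all }t
\]
forces $B=1$ and $A=\mu$ at once. These are the paper's $A=\frac{\mu}{2c}$, $B=\frac{1}{2c}$, since the paper has no $\frac{1}{2c}$ prefactor. Condition (ii) follows from (i) and is not needed.

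\textbf{The series route.} The route you want as the main proof is genuinely different from the paper's proof of this theorem; the paper only averages over $N(t)$ in the planar case. It buys real rigor. The PDE route, yours and the paper's alike, selects $g$ by an ansatz plus one normalization, with no uniqueness argument, whereas the series identifies $g$ directly. As sketched, however, it is incomplete and partly wrong. You need three things.
\begin{itemize}
\item[(a)] Given $N(t)=n$, the event $\{\mathcal{T}>t\}$ has probability $(1-p)^n$ and is independent of $V(0)$ and the arrival times. Hence the arrival times remain uniform order statistics, and $\mathbb{P}(N(t)=n\mid\mathcal{T}>t)=e^{-\mu t}(\mu t)^n/n!$.
\item[(b)] The full conditional densities:
\[
\frac{(2k+1)!}{(k!)^2}\,\frac{(c^2t^2-x^2)^k}{(2ct)^{2k+1}}\quad(n=2k+1),\qquad\frac{(2k)!}{k!\,(k-1)!}\,\frac{ct\,(c^2t^2-x^2)^{k-1}}{(2ct)^{2k}}\quad(n=2k\ge2).
\]
\item[(c)] The parity assignment, which is the reverse of yours. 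The odd terms sum to $\frac{\mu}{2c}e^{-\mu t}I_0$ and the even terms to $\frac{1}{2c}e^{-\mu t}\partial_tI_0$; for instance, the $n=1$ term is the constant $k=0$ term of $I_0$.
\end{itemize}
With these in place, the coefficient matching is a two-line check.
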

\begin{proof}
By defining the auxiliary function
\begin{equation}\widetilde{g}(x,t)=e^{\lambda(1-p)t}g(x,t)\label{botsw}\end{equation}
and taking into account equation (\ref{gpde}), it can be verified that $\widetilde{g}(x,t)$ satisfies the partial differential equation
\begin{equation}\frac{\partial^2\widetilde{g}}{\partial t^2}-c^2\frac{\partial^2\widetilde{g}}{\partial x^2}=\lambda^2(1-p)^2\widetilde{g}.\label{gtildeeq}\end{equation}
The change of variables
\begin{equation}z=\sqrt{c^2t^2-x^2}\label{besseltrans}\end{equation}
now transforms equation (\ref{gtildeeq}) into the Bessel-type equation
$$\frac{d^2\widetilde{g}}{dz^2}+\frac{1}{z}\frac{d\widetilde{g}}{dz}-\frac{\lambda^2(1-p)^2}{c^2}\widetilde{g}=0,$$
whose general solution is
$$\widetilde{g}(z)=A\, I_0\left(\frac{\lambda(1-p)}{c}z\right)+B\, K_0\left(\frac{\lambda(1-p)}{c}z\right).$$
We disregard the term involving the modified Bessel function of the second kind $K_0(z)$, which diverges at $z=0$. Since equation (\ref{gpde}) is linear and has constant coefficients, the time derivative of any of its solutions is again a solution. Therefore, after inverting transformations (\ref{besseltrans}) and (\ref{botsw}), we consider a solution of the form
\begin{equation}g(x,t)=e^{-\lambda(1-p)t}\left[A I_0\left(\frac{\lambda(1-p)}{c}\sqrt{c^2t^2-x^2}\right)+B\frac{\partial}{\partial t}I_0\left(\frac{\lambda(1-p)}{c}\sqrt{c^2t^2-x^2}\right)\right].\label{Besseleqdenstel}\end{equation}
In order to determine the coefficients $A$ and $B$, we recall that the support of the process $X(t)$ coincides with the interval $[-ct,ct]$, and the probability mass at the endpoints of this interval is given in formula (\ref{probmassendpts}). Hence, in the interior of the support, it holds that
\begin{equation}\int_{-ct}^{ct}g(x,t)\,dx=1-e^{-\lambda(1-p)t}.\label{intcodn}\end{equation}
In order to impose the condition (\ref{intcodn}), we use the identities
$$\int_{-ct}^{ct}I_0\left(\frac{\lambda(1-p)}{c}\sqrt{c^2t^2-x^2}\right)dx=\frac{2c}{\lambda(1-p)}\sinh\big(\lambda(1-p)t\big)$$
and
$$\int_{-ct}^{ct}\frac{\partial}{\partial t}I_0\left(\frac{\lambda(1-p)}{c}\sqrt{c^2t^2-x^2}\right)dx=2c\left[\cosh\big(\lambda(1-p)t\big)-1\right].$$
Therefore,
\begin{align}
\int_{-ct}^{ct}g(x,t)\,dx=e^{-\lambda(1-p)t}\Bigg\{&\frac{Ac}{\lambda(1-p)}\left[e^{\lambda(1-p)t}-e^{-\lambda(1-p)t}\right]\nonumber\\
&+Bc\left[e^{\lambda(1-p)t}+e^{-\lambda(1-p)t}-2\right]\Bigg\}.\label{allinecaz}
\end{align}
By comparing now formulas (\ref{intcodn}) and (\ref{allinecaz}), we immediately obtain that
$$A=\frac{\lambda(1-p)}{2c},\qquad B=\frac{1}{2c},$$
which completes the proof.
\end{proof}
Having proved that the conditional process $X(t)\big\lvert\{\mathcal{T}>t\}$ is a telegraph process with switching intensity $\lambda(1-p)$, we now turn to the probabilistic interpretation of this result. We emphasize that the unconditional process $X(t)$ is telegraph process with switching intensity $\lambda$, and that killing may occur each time a change of direction takes place. Thus, conditioning on the event ${\mathcal{T}>t}$ is equivalent to restricting the probability space to include only a subset of the Poisson events, namely those that do not result in the process $X(t)$ being killed. In particular, the conditioning leads to the exclusion of a proportion $p$ of the Poisson events, while retaining the remaining proportion $1-p$. This modification of a Poisson process, which involves discarding a proportion of its events, is known in the probability literature as thinning of the Poisson process (we refer the reader to Chapter 5 of the book by Last and Penrose \cite{lastpenrose}). The thinning theorem states that a thinned Poisson process is still a Poisson process, but with a lower intensity. In our model, conditioning on killing not having occurred is equivalent to thinning the Poisson process governing the direction changes. Since the thinned process is itself a Poisson process, the conditional process $X(t)\big\lvert\{\mathcal{T}>t\}$ remains a telegraph process, although its switching intensity is reduced to reflect the thinning. In particular, it can be proved that the direction changes of $X(t)\big\lvert\{\mathcal{T}>t\}$ are paced by the thinned Poisson process $N(t)\big\lvert\{\mathcal{T}>t\}$, whose probability mass function is $$\mathbb{P}\Big(N(t)=k\,\Big\lvert\,\mathcal{T}>t\Big)=e^{-\lambda(1-p)}\;\frac{\lambda^k(1-p)^k}{k!},\qquad k\in\mathbb{N}_0.$$
In view of formula (\ref{Besseleqdenstel}), we are also able to study the asymptotic behaviour of the conditional process $X(t)\big\lvert\{\mathcal{T}>t\}$ in the hydrodynamic limit for $\lambda,c\to+\infty$, $p\to0$, under the scaling $\frac{\lambda}{c^2}\to1,\,\lambda p\to1$. By using the method discussed by Orsingher \cite{orsingher} to study the hydrodynamic limit of the density function (\ref{Besseleqdenstel}), it can be proved that
$$\lim_{\substack{\lambda,c\to+\infty\\p\to0}}\mathbb{P}\Big(X(t)\in\mathop{dx}\big\lvert\,\mathcal{T}>t\Big)/dx=\lim_{\substack{\lambda,c\to+\infty\\p\to0}}\mathbb{P}\Big(X(t)\in\mathop{dx}\Big)/dx=\frac{1}{\sqrt{2\pi t}}\,e^{-\frac{x^2}{2t}}.$$ This identity further supports the claim made in Section \ref{sec:uniT} that $X(t)$ and $\mathcal{T}$ become independent in the hydrodynamic limit.

We continue our analysis by investigating the joint distribution of the position of the telegraph process at the killing time $X(\mathcal{T})$ and the killing time $\mathcal{T}$ itself. It is immediate to verify that the support of the random vector $\big(X(\mathcal{T}),\,\mathcal{T}\big)$ is the infinite cone $C=\{(x,t)\in\mathbb{R}^2:\;\lvert x\lvert \le ct,\;t>0\}$. Moreover, the distribution exhibits a singular component on the boundary of the cone $C$. In fact, it can be verified that
\begin{equation}\label{bennipc}\mathbb{P}\Big(\left\lvert X(\mathcal{T})\right\lvert=c\,\mathcal{T}\Big)=\mathbb{P}\Big(\mathcal{T}=T_1\Big)=p.\end{equation}
Formula (\ref{bennipc}) highlights that the boundary of the cone $C$ identifies the trajectories of the telegraph process which are killed at the first direction change. In order to determine the exact distribution of $\big(X(\mathcal{T}),\,\mathcal{T}\big)$ over the boundary of $C$, it is sufficient to observe that
\begin{equation}\mathbb{P}\Big(X(\mathcal{T})=ct,\,\mathcal{T}\in dt\Big) /dt=\mathbb{P}\Big(D(0)=d_0,\,\mathcal{T}=T_1,\,T_1\in dt\Big) /dt=\frac{\lambda\,p}{2}\,e^{-\lambda t},\qquad t>0.\label{benpcda}\end{equation}
As for the distribution of $\big(X(\mathcal{T}),\,\mathcal{T}\big)$ in the interior of the support $C$, we observe that, for a small time interval of amplitude $\Delta t$,
\begin{align}
\mathbb{P}\Big(X(\mathcal{T})\in dx,\;\mathcal{T}\in (t,\;t+\Delta t]\Big)&=\mathbb{P}\Big(X(t)\in dx,\;\mathcal{T}\in (t,\;t+\Delta t]\Big)+o(\Delta t)\nonumber\\
&=\mathbb{P}\Big(X(t)\in dx,\;\mathcal{T}>t,\;\mathcal{T}\in (t,\;t+\Delta t]\Big)+o(\Delta t)\nonumber\\
&=\mathbb{P}\Big(X(t)\in dx\Big\lvert \mathcal{T}>t\Big)\cdot \mathbb{P}\Big(\mathcal{T}\in (t,\;t+\Delta t]\Big\lvert \mathcal{T}>t\Big)\cdot\mathbb{P}\Big(\mathcal{T}>t\Big)+o(\Delta t)\nonumber\\
&=\lambda\,p\,\Delta t\;e^{-\lambda p t}\,\mathbb{P}\Big(X(t)\in dx\Big\lvert \mathcal{T}>t\Big)+o(\Delta t),\qquad \lvert x\lvert<ct,\, t>0.\label{coscopia}
\end{align}
Dividing formula (\ref{coscopia}) by $\Delta t$ and taking the limit for $\Delta t\to0$ yields
\begin{align*}
\frac{\mathbb{P}\Big(X(\mathcal{T})\in dx,\;\mathcal{T}\in dt\Big)}{dx\,dt}=\frac{\lambda p\,e^{-\lambda t}}{2c}\,\Bigg[\lambda(&1-p)I_0\left(\frac{\lambda(1-p)}{c}\sqrt{c^2t^2-x^2}\right)\\
&+\frac{\partial}{\partial t}I_0\left(\frac{\lambda(1-p)}{c}\sqrt{c^2t^2-x^2}\right)\Bigg],\qquad \lvert x\lvert<ct,\, t>0.
\end{align*}
By including the singular component (\ref{benpcda}), and denoting by $\delta(\cdot)$ the Dirac delta function, we can finally write that
\begin{align}
\mathbb{P}\Big(X(\mathcal{T})\in dx,\;\mathcal{T}\in dt\Big)=\,&\frac{\lambda p\,e^{-\lambda t}}{2}\big[\delta(x-ct)+\delta(x+ct)\big]\,dx\,dt\nonumber\\
&+\frac{\lambda p\,e^{-\lambda t}}{2c}\mathbbm{1}_{\{|x|<ct\}}\Bigg[\lambda(1-p)I_0\left(\frac{\lambda(1-p)}{c}\sqrt{c^2t^2-x^2}\right)\nonumber\\
&\qquad\qquad\qquad\qquad\qquad+\frac{\partial}{\partial t}I_0\left(\frac{\lambda(1-p)}{c}\sqrt{c^2t^2-x^2}\right)\Bigg]\,dx\,dt,\qquad (x,t)\in C.\label{biscfreddgz}
\end{align}
We observe that several consistency checks can be performed to verify that the distribution in (\ref{biscfreddgz}) agrees with the results obtained so far. In particular, integrating the density in $(\ref{biscfreddgz})$ with respect to $x$ and $t$ yields formulas (\ref{lpdens}) and (\ref{ffubit}), respectively. We leave the details of this verification to the reader. However, we emphasize that integration with respect to $x$ can be achieved by using the Sommerfeld formula (see Magnus and Oberhettinger \cite{magnus1943formeln}, p. 34), which states that, for $x,r\in\mathbb{R}$,
\begin{equation*}
\frac{e^{i\kappa\sqrt{r^2+x^2}}}{\sqrt{r^2+x^2}}=
\int_0^\infty J_0(\tau r)\,\frac{\tau\;e^{-|x|\sqrt{\tau^2-\kappa^2}}}{\sqrt{\tau^2-\kappa^2}}\,d\tau,
\qquad-\frac{\pi}{2}\le\arg(\sqrt{\tau^2-\kappa^2})<\frac{\pi}{2},\;0\le\arg(\kappa)<\pi.
\end{equation*}
\noindent At this point, we have all the ingredients needed to derive the explicit distribution of the killed telegraph process $X(t\wedge\mathcal{T})$. The resulting distribution is stated in the following theorem.
\begin{thm}For $t>0$ and $x\in[-ct,ct]$, it holds that
$$
\begin{aligned}
\mathbb{P}\Big(X(t\wedge\mathcal{T})\in dx\Big)/dx
={}&\frac{e^{-\lambda t}}{2}\big[\delta(x-ct)+\delta(x+ct)\big]\\
&+\frac{\mathbbm{1}_{\{|x|<ct\}}}{2c}\Bigg\{e^{-\lambda t}\left[\lambda I_0\left(\frac{\lambda(1-p)}{c}\sqrt{c^2t^2-x^2}\right)+\frac{\partial}{\partial t}I_0\left(\frac{\lambda(1-p)}{c}\sqrt{c^2t^2-x^2}\right)\right]\\
&\qquad+\lambda^2p(2-p)\int_{|x|/c}^{t}e^{-\lambda s}I_0\left(\frac{\lambda(1-p)}{c}\sqrt{c^2s^2-x^2}\right)\,ds\Bigg\}.
\end{aligned}
$$
\end{thm}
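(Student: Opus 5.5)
We split on whether killing has occurred by time $t$:
$$\mathbb{P}\big(X(t\wedge\mathcal{T})\in dx\big)=\mathbb{P}\big(X(t)\in dx,\;\mathcal{T}>t\big)+\int_0^t\mathbb{P}\big(X(\mathcal{T})\in dx,\;\mathcal{T}\in ds\big).$$
Each term is then evaluated from results already proved.

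For the first term we write $\mathbb{P}(X(t)\in dx,\mathcal{T}>t)=e^{-\lambda pt}\,\mathbb{P}(X(t)\in dx\mid\mathcal{T}>t)$ and use the survival results. The singular part (\ref{probmassendpts}) contributes $\tfrac12 e^{-\lambda pt}e^{-\lambda(1-p)t}=\tfrac12e^{-\lambda t}$ at each endpoint $x=\pm ct$. The absolutely continuous part (\ref{manglumron}) contributes
$$\frac{e^{-\lambda t}}{2c}\Big[\lambda(1-p)I_0(\cdot)+\partial_tI_0(\cdot)\Big]\mathbbm{1}_{\{|x|<ct\}}.$$

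For the second term we integrate the joint law (\ref{biscfreddgz}) over $s\in(0,t]$. Fix $|x|<ct$. The singular part of (\ref{biscfreddgz}) is concentrated on $|x|=cs$, i.e. at $s=|x|/c$, so it contributes
$$\int_0^t\frac{\lambda p}{2}e^{-\lambda s}\,\delta(x\mp cs)\,ds=\frac{\lambda p}{2c}\,e^{-\lambda|x|/c}.$$
At $s=|x|/c$ we have $I_0(0)=1$, so this equals $\frac{\lambda p}{2c}e^{-\lambda|x|/c}I_0(0)$. We keep it aside, since it should cancel a boundary term below. The absolutely continuous part contributes
$$\frac{\lambda p}{2c}\int_{|x|/c}^t e^{-\lambda s}\Big[\lambda(1-p)I_0(s)+\partial_sI_0(s)\Big]ds,$$
where $I_0(s)$ abbreviates $I_0\big(\frac{\lambda(1-p)}{c}\sqrt{c^2s^2-x^2}\big)$.

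The key step is to integrate $e^{-\lambda s}\partial_sI_0(s)$ by parts:
$$\int_{|x|/c}^t e^{-\lambda s}\partial_sI_0\,ds=e^{-\lambda t}I_0(t)-e^{-\lambda|x|/c}+\lambda\int_{|x|/c}^t e^{-\lambda s}I_0\,ds.$$
We need to check that $\partial_sI_0$ is integrable near $s=|x|/c$. This holds because $I_0$ is analytic in $z^2=\frac{\lambda^2(1-p)^2}{c^2}(c^2s^2-x^2)$, so $\partial_sI_0$ is bounded there.

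Substituting back, the boundary term $-\frac{\lambda p}{2c}e^{-\lambda|x|/c}$ cancels the singular contribution found above. The remaining integrals combine with coefficient
$$\frac{\lambda p}{2c}\big[\lambda(1-p)+\lambda\big]=\frac{\lambda^2p(2-p)}{2c}.$$
The leftover $\frac{\lambda p}{2c}e^{-\lambda t}I_0(t)$ adds to the survival term's $\frac{\lambda(1-p)}{2c}e^{-\lambda t}I_0$ to give $\frac{\lambda}{2c}e^{-\lambda t}I_0$. This is exactly the stated formula.

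The main obstacle is getting the boundary contributions right. The singular killing mass on the cone boundary and the lower-limit term from integration by parts must cancel, and all the $\delta$-terms must be interpreted consistently. As a check, we would verify that the total mass equals one. This follows from $\int g\,dx=1-e^{-\lambda(1-p)t}$ together with the time-marginal $\lambda p e^{-\lambda ps}$ of $\mathcal{T}$.
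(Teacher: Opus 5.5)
Your proposal is correct and follows the paper's proof. You use the same split into the survival term, evaluated via (\ref{probmassendpts}) and (\ref{manglumron}), and the $s$-integral of the joint law (\ref{biscfreddgz}). Your integration by parts supplies the ``straightforward calculation'' the paper leaves implicit: its lower boundary term $-\frac{\lambda p}{2c}e^{-\lambda|x|/c}$ cancels the projected cone-boundary mass, and its upper term combines with the survival term's $\lambda(1-p)$ coefficient to give $\lambda$.
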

\begin{proof}
By conditioning on whether killing occurs before time $t$, we have
$$
\mathbb{P}\big(X(t\wedge\mathcal{T})\in dx\big)=\mathbb{P}\big(X(t)\in dx,\mathcal{T}>t\big)+\int_0^t\mathbb{P}\big(X(\mathcal{T})\in dx,\mathcal{T}\in ds\big).$$
The first term can be obtained by using the probability density function (\ref{manglumron}) and the related singular distribution (\ref{probmassendpts}), whereas the second term is obtained from $(\ref{biscfreddgz})$. Thus, proving the result is a matter of straightforward calculation.
\end{proof}

\section{On the planar case with an infinite number of directions}\label{sec:4}
So far, we have examined the effect of the killing mechanism on the distribution of a one-dimensional telegraph process. In this section, we extend our analysis to the planar case. Specifically, we consider the planar finite-velocity motion introduced by Orsingher and De Gregorio \cite{orsingherdegreg}. The authors introduced a planar random motion $\big(X(t),\;Y(t))$ starting at the origin at time $t=0$, with an initial direction chosen uniformly at random from $[0,2\pi)$. The process then moves with constant velocity $c$ and changes direction according to a Poisson process $N(t)$ with constant intensity $\lambda$. Each time a Poisson event occurs, a new direction is chosen randomly with uniform distribution. Denoting by $T_k,\;k\in\mathbb{N}_0$, the arrival times of the Poisson process, Orsingher and De Gregorio \cite{orsingherdegreg} observed that the random vector $\big(X(t),\;Y(t))$ at the arrival $T_n$ admits the explicit representation
$$
\big(X(T_n),Y(T_n)\big)
=
\left(
c\sum_{k=1}^{n}(T_k-T_{k-1})\cos\Theta_k,\,
c\sum_{k=1}^{n}(T_k-T_{k-1})\sin\Theta_k
\right).
$$
where $\Theta_k,\;k\in\mathbb{N}$ are independent and identically distributed random variables with continuous uniform distribution over the interval $[0,2\pi)$.
Similarly to the univariate case, however, we introduce a killing mechanism for this process by defining a killing time $\mathcal{T}$ with geometric distribution over the arrival times of $N(t)$. In particular, by assuming that the process may be killed with probability $p$ every time a direction change occurs, we have that
\begin{equation}\mathbb{P}\Big(\mathcal{T}=T_k\Big\lvert T_1,\,T_2,\,T_3,\,...\Big)=(1-p)^{k-1} p,\qquad k\in\mathbb{N}.\label{bivporc}\end{equation}
Our aim is to investigate the distribution of the killed bivariate process $\big(X(t\wedge\mathcal{T}),\;Y(t\wedge\mathcal{T}))$. Some sample paths of the planar motion subject to the killing mechanism are illustrated in Figure \ref{fig:planar_trajectories}.
\begin{figure}[htbp]
    \centering
    \includegraphics[width=0.4\textwidth]{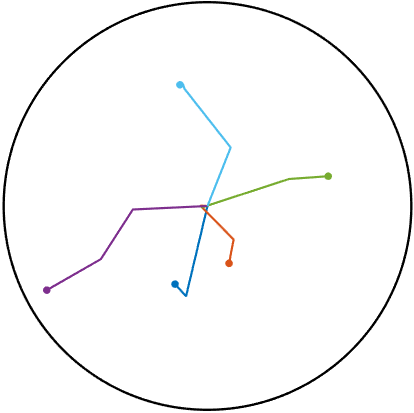}
    \caption{Five sample trajectories of the planar random motion with infinitely many directions and killing at Poisson events. The particle starts at the origin, moves at constant speed $c=25$, and changes direction at Poisson times with intensity $\lambda=3.5$, choosing each new direction uniformly on $[0,2\pi)$. At each Poisson event, killing occurs independently with probability $p=0.05$. Colored dots indicate the killing positions, while the black circle represents the boundary of the disk of radius $cT=25$, with $T=1$. The trajectories are sampled conditionally on killing occurring before time $T$.}
\label{fig:planar_trajectories}
\end{figure}
We begin our analysis by studying the distribution of the process at the killing time, namely the distribution of the random vector $\big(X(\mathcal{T}),Y(\mathcal{T})\big)$. In view of formula (\ref{bivporc}), we can write
\begin{equation}\big(X(\mathcal{T}),\;Y(\mathcal{T}))=\left(c\sum_{k=1}^{N(\mathcal{T})}(T_{k}-T_{k-1})\cos\Theta_k,\;c\sum_{k=1}^{N(\mathcal{T})}(T_{k}-T_{k-1})\,\sin\Theta_k \right)\label{bivporcdu}\end{equation}
Formula (\ref{bivporcdu}) permits us to derive the characteristic function $\big(X(\mathcal{T}),\;Y(\mathcal{T}))$. We start by studying the characteristic function conditional on the number of direction changes up to the killing time, which is given in the following theorem.
\begin{thm}\label{infcondchf}
For every $n\geq 1$, the characteristic function of $\big(X(\mathcal{T}),\;Y(\mathcal{T}))$ conditional on $N(\mathcal{T})=n$ is
\begin{equation}\mathbb{E}\left(e^{i[\alpha X(\mathcal{T})+\beta Y(\mathcal{T})]}\Big \lvert N(\mathcal{T})=n\right)=\left(\frac{\lambda}{\sqrt{\lambda^2+c^2(\alpha^2+\beta^2)}}\right)^n,\qquad \alpha,\beta\in\mathbb{R}.\label{infcondf}\end{equation}
\end{thm}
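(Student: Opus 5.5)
Conditionally on $N(\mathcal{T})=n$, formula (\ref{bivporcdu}) expresses $(X(\mathcal{T}),Y(\mathcal{T}))$ as a sum of $n$ random vectors $c\,(T_k-T_{k-1})(\cos\Theta_k,\sin\Theta_k)$, $k=1,\dots,n$. The plan is to show that, given $N(\mathcal{T})=n$, these vectors are i.i.d. By (\ref{bivporc}), the killing decision is made by independent coin flips that depend neither on the interarrival times nor on the directions. Hence conditioning on $N(\mathcal{T})=n$ leaves the joint law of $(T_1-T_0,\dots,T_n-T_{n-1},\Theta_1,\dots,\Theta_n)$ unchanged: the interarrival times are i.i.d. exponential with rate $\lambda$, the angles are i.i.d. uniform on $[0,2\pi)$, and the two families are independent. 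The conditional characteristic function therefore factorizes as $\varphi(\alpha,\beta)^n$, where
\[
\varphi(\alpha,\beta)=\mathbb{E}\left[e^{icE(\alpha\cos\Theta+\beta\sin\Theta)}\right],
\]
with $E$ exponential with rate $\lambda$ and $\Theta$ uniform on $[0,2\pi)$, independent of $E$.

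To compute $\varphi$, first condition on $E=s$. Writing $\alpha\cos\theta+\beta\sin\theta=\rho\cos(\theta-\phi)$ with $\rho=\sqrt{\alpha^2+\beta^2}$, and using the rotation invariance of the uniform angle, we get
\[
\frac{1}{2\pi}\int_0^{2\pi}e^{ics\rho\cos\theta}\,d\theta=J_0(cs\rho).
\]
Averaging over $E$ then gives
\[
\varphi(\alpha,\beta)=\int_0^\infty \lambda e^{-\lambda s}J_0(c\rho s)\,ds .
\]
This is the classical Laplace transform $\int_0^\infty e^{-as}J_0(bs)\,ds=1/\sqrt{a^2+b^2}$ for $a>0$, which can be checked by expanding $J_0$ in its power series and integrating term by term when $b<a$, then extending by analyticity. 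It yields
\[
\varphi(\alpha,\beta)=\frac{\lambda}{\sqrt{\lambda^2+c^2(\alpha^2+\beta^2)}},
\]
and raising this to the $n$-th power gives (\ref{infcondf}).

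The main point needing care is the first step: justifying that conditioning on $N(\mathcal{T})=n$ preserves the i.i.d. structure. I would argue it by noting that $\mathcal{T}=T_K$, where $K$ is a geometric index independent of the whole sequence $\{(T_k-T_{k-1},\Theta_k)\}_{k\ge1}$, exactly as in the univariate analysis leading to (\ref{geometricdistribution}) and (\ref{identityT}). The Bessel integral is standard and poses no real difficulty.
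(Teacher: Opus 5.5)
Your proposal is correct and follows the same route as the paper: factor the conditional characteristic function over the $n$ independent segments, reduce each angular average to $J_0\bigl(cs\sqrt{\alpha^2+\beta^2}\bigr)$, and integrate against the exponential density using the Laplace transform of $J_0$. The only additions on your side are two. First, you explicitly justify that conditioning on $N(\mathcal{T})=n$ leaves the i.i.d. structure intact, via $\mathcal{T}=T_K$ with $K$ independent of the segments; the paper uses this tacitly. Second, you sketch a proof of the Laplace-transform identity, which the paper simply quotes.
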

\begin{proof}Since the interarrival times $T_k-T_{k-1},\;k\ge1,$ are exponentially distributed, it holds that
\begin{align}
\mathbb{E}\left(e^{i[\alpha X(\mathcal{T})+\beta Y(\mathcal{T})]}\Big\lvert N(\mathcal{T})=n\right)
&=\mathbb{E}\left[\exp\left(i\alpha c\sum_{k=1}^{n}(T_k-T_{k-1})\cos\Theta_k+i\beta c\sum_{k=1}^{n}(T_k-T_{k-1})\sin(\Theta_k)\right)\right] \nonumber \\
&=\int_{0}^{\infty}ds_1\cdots\int_{0}^{\infty}ds_n\,\int_{0}^{2\pi}d\theta_1\cdots\int_{0}^{2\pi}d\theta_n\;\left(\frac{\lambda}{2\pi}\right)^n e^{-\lambda\sum_{k=1}^{n}s_k}\nonumber \\
&\qquad\qquad\qquad \cdot\exp\left(i\alpha c\sum_{k=1}^{n}s_k\cos\theta_k+i\beta c\sum_{k=1}^{n}s_k\sin\theta_k\right) \nonumber \\
&=\int_{0}^{\infty}ds_1\cdots\int_{0}^{\infty}ds_n\,\lambda^ne^{-\lambda\sum_{k=1}^{n}s_k}\prod_{k=1}^{n}\frac{1}{2\pi}\int_{0}^{2\pi}e^{i\alpha c s_k\cos\theta_k+i\beta c s_k\sin\theta_k}\,d\theta_k \nonumber \\
&=\int_{0}^{\infty}ds_1\cdots\int_{0}^{\infty}ds_n\,\lambda^ne^{-\lambda\sum_{k=1}^{n}s_k}\prod_{k=1}^{n}J_0\left(c s_k\sqrt{\alpha^2+\beta^2}\right) \nonumber
\end{align}
where we have used the following integral representation for the Bessel function (see Watson \cite{watson})
\begin{equation*}
J_0\left(x\sqrt{a^2+b^2}\right)=\frac{1}{2\pi}\int_{0}^{2\pi}\exp\left\{ix\left(a\cos\theta+b\sin\theta\right)\right\}\,d\theta,\qquad x\in\mathbb{R}.
\end{equation*}
By recalling now the Laplace transform of the Bessel function $J_0(\cdot)$, that is
\begin{equation*}
\int_{0}^{+\infty}e^{-y s}J_0(as)\,ds=\frac{1}{\sqrt{y^2+a^2}},\qquad a\in\mathbb{R},\,y>0
\end{equation*}
we can write that
\begin{align}
\mathbb{E}\left(e^{i[\alpha X(\mathcal{T})+\beta Y(\mathcal{T})]}\Big\lvert N(\mathcal{T})=n\right)&=\lambda^n\prod_{k=1}^{n}\int_{0}^{\infty}e^{-\lambda s_k}J_0\left(c s_k\sqrt{\alpha^2+\beta^2}\right)\,ds_k \nonumber \\
&=\lambda^n\prod_{k=1}^{n}\frac{1}{\sqrt{\lambda^2+c^2(\alpha^2+\beta^2)}} \nonumber \\
&=\left(\frac{\lambda}{\sqrt{\lambda^2+c^2(\alpha^2+\beta^2)}}\right)^n\nonumber
\end{align}
which completes the proof.
\end{proof}
At this point, we can take the expected value of the characteristic function (\ref{infcondf}) in order to remove the conditioning with respect to $N(\mathcal{T})$. Thus, we are able to obtain the unconditional characteristic function of the killed process $\big(X(\mathcal{T}),\;Y(\mathcal{T}))$ in the following theorem.

\begin{thm}\label{Laleinvac}
The characteristic function of $\big(X(\mathcal{T}),\;Y(\mathcal{T}))$ is
\begin{equation}\mathbb{E}\left(e^{i[\alpha X(\mathcal{T})+\beta Y(\mathcal{T})]}\right)=\frac{\lambda p}{\sqrt{\lambda^2+c^2(\alpha^2+\beta^2)}-\lambda(1-p)},\qquad \alpha,\beta\in\mathbb{R}.\label{infiunccf}\end{equation}
\end{thm}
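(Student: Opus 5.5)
We obtain the unconditional characteristic function by averaging the conditional one of Theorem \ref{infcondchf} over the law of $N(\mathcal{T})$, in the same way as in the proof that $\mathcal{T}$ is exponential. By the tower property,
$$\mathbb{E}\left(e^{i[\alpha X(\mathcal{T})+\beta Y(\mathcal{T})]}\right)=\sum_{n=1}^{\infty}\mathbb{P}\big(N(\mathcal{T})=n\big)\,\mathbb{E}\left(e^{i[\alpha X(\mathcal{T})+\beta Y(\mathcal{T})]}\Big\lvert N(\mathcal{T})=n\right).$$

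Before using this, we must check two facts in the planar setting. First, $N(\mathcal{T})$ is geometric with $\mathbb{P}(N(\mathcal{T})=n)=(1-p)^{n-1}p$, as in (\ref{geometricdistribution}). This follows from (\ref{bivporc}) by taking expectations over the arrival times, since the right-hand side does not depend on them. Second, conditioning on $N(\mathcal{T})=n$ does not alter the joint law of the interarrival times $T_k-T_{k-1}$ and the angles $\Theta_k$. This holds because the killing decisions are independent Bernoulli$(p)$ marks attached to the Poisson events, and they are independent of both the interarrival times and the directions. This independence is already used implicitly in Theorem \ref{infcondchf}, so we may take (\ref{infcondf}) as given.

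Write $\rho=\lambda/\sqrt{\lambda^2+c^2(\alpha^2+\beta^2)}$. Then $0<\rho\le1$, and so $(1-p)\rho<1$ because $p\in(0,1)$. The geometric series therefore converges absolutely, and
$$\sum_{n=1}^{\infty}p(1-p)^{n-1}\rho^n=\frac{p\rho}{1-(1-p)\rho}.$$
Multiplying numerator and denominator by $\sqrt{\lambda^2+c^2(\alpha^2+\beta^2)}$ gives exactly (\ref{infiunccf}).

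There is no real obstacle here. The only points needing care are justifying the interchange of summation and expectation, which is immediate by dominated convergence since every term is bounded by $p(1-p)^{n-1}$, and the conditional-independence remark above. As a sanity check, at $\alpha=\beta=0$ the expression equals $\lambda p/(\lambda-\lambda(1-p))=1$.
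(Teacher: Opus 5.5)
Your proposal is correct and matches the paper's proof: you average the conditional characteristic function of Theorem~\ref{infcondchf} against the geometric law of $N(\mathcal{T})$ and sum the geometric series, which converges because $(1-p)\rho<1$. Your extra remarks (the independence of the killing marks, the justification of the interchange, and the check at $\alpha=\beta=0$) are sound but add nothing the paper's argument needs.
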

\begin{proof}
By using Theorem \ref{infcondchf}, we can write that
\begin{align}
\mathbb{E}\left(e^{i[\alpha X(\mathcal{T})+\beta Y(\mathcal{T})]}\right)
&=\sum_{n=1}^{\infty}\mathbb{P}(N(\mathcal{T})=n)\;\mathbb{E}\left[e^{i[\alpha X(\mathcal{T})+\beta Y(\mathcal{T})]}\,\big\lvert\, N(\mathcal{T})=n\right] \nonumber \\
&=p\sum_{n=1}^{\infty}(1-p)^{n-1}\left(\frac{\lambda}{\sqrt{\lambda^2+c^2(\alpha^2+\beta^2)}}\right)^n \nonumber \\
&=\frac{p}{1-p}\sum_{n=1}^{\infty}\left(\frac{\lambda(1-p)}{\sqrt{\lambda^2+c^2(\alpha^2+\beta^2)}}\right)^n \nonumber\\
&=\frac{\lambda p}{\sqrt{\lambda^2+c^2(\alpha^2+\beta^2)}-\lambda(1-p)}.\nonumber
\end{align}
\end{proof}
So far, we have provided a characterization of the distribution of the killed bivariate motion through the corresponding characteristic function. However, we are interested in providing an explicit representation of the probability density function of the $\big(X(\mathcal{T}),\;Y(\mathcal{T}))$. To this aim, we define the density function
\begin{equation}f(x,y)\;dx\,dy=\mathbb{P}\Big(X(\mathcal{T})\in dx,\;Y(\mathcal{T})\in dy\Big)\label{inffdefuncond}\end{equation}
In order to study the density (\ref{inffdefuncond}), we first condition it to the number of direction changes $N(\mathcal{T})$. Thus, we consider the conditional probability density function
\begin{equation}f_n(x,y)\;dx\,dy=\mathbb{P}\Big(X(\mathcal{T})\in dx,\;Y(\mathcal{T})\in dy\;\Big\lvert\, N(\mathcal{T})=n\Big).\label{inffdefcond}\end{equation}
In the following theorem, we give an explicit representation of the conditional density (\ref{inffdefcond}) by calculating the inverse Fourier transform of the characteristic function (\ref{infcondf}).
\begin{thm}For every $n\geq 1$, the joint probability density function of $\big(X(\mathcal{T}),\;Y(\mathcal{T}))$ conditional on $N(\mathcal{T})=n$ is
\begin{equation}f_n(x,y)=\frac{1}{2^{\frac{n}{2}}\pi\;\Gamma\!\left(\frac n2\right)}\left(\frac{\lambda}{c}\right)^{\frac{n}{2}+1}\!(x^2+y^2)^{\frac{1}{2}\left(\frac{n}{2}-1\right)}\;K_{\frac{n}{2}-1}\!\left(\frac{\lambda}{c}\sqrt{x^2+y^2}\right)\label{lalalalellsls}\end{equation}
\end{thm}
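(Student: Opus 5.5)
The plan is to invert the radial characteristic function of Theorem \ref{infcondchf} with a Hankel transform. The characteristic function (\ref{infcondf}) depends on $(\alpha,\beta)$ only through $\rho=\sqrt{\alpha^2+\beta^2}$, so the density $f_n$ is rotationally invariant. Writing $r=\sqrt{x^2+y^2}$, the two-dimensional Fourier inversion then reduces, via the same Bessel integral representation of $J_0$ used in the proof of Theorem \ref{infcondchf}, to
$$f_n(x,y)=\frac{1}{2\pi}\int_0^{\infty}\rho\,J_0(\rho r)\left(\frac{\lambda}{\sqrt{\lambda^2+c^2\rho^2}}\right)^n d\rho=\frac{1}{2\pi}\left(\frac{\lambda}{c}\right)^n\int_0^{\infty}\frac{\rho\,J_0(\rho r)}{\left(\rho^2+\lambda^2/c^2\right)^{n/2}}\,d\rho .$$

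The key step is to evaluate this integral with the classical Hankel-transform formula (Watson, Gradshteyn--Ryzhik 6.565.4):
$$\int_0^\infty \frac{J_\nu(\rho r)\,\rho^{\nu+1}}{(\rho^2+a^2)^{\mu+1}}\,d\rho=\frac{a^{\nu-\mu}r^{\mu}}{2^{\mu}\Gamma(\mu+1)}K_{\nu-\mu}(ar).$$
I apply it with $\nu=0$, $\mu=n/2-1$ and $a=\lambda/c$, which gives
$$\int_0^{\infty}\frac{\rho J_0(\rho r)}{(\rho^2+a^2)^{n/2}}\,d\rho=\frac{a^{1-n/2}\,r^{n/2-1}}{2^{n/2-1}\Gamma(n/2)}K_{1-n/2}(ar).$$
Using $K_{-\nu}=K_\nu$ and collecting constants yields
$$\frac{1}{2\pi}\,a^n\,\frac{a^{1-n/2}\,2}{2^{n/2}\Gamma(n/2)}\,r^{n/2-1}K_{n/2-1}(ar),$$
which is exactly (\ref{lalalalellsls}), since $(x^2+y^2)^{\frac12(\frac n2-1)}=r^{n/2-1}$.

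The main obstacle is justifying the inversion, and $n=1,2$ need care. For $n\ge3$ the characteristic function is integrable in $\mathbb{R}^2$, since it behaves like $\rho^{-n}$, so the inversion formula holds pointwise. For $n=1,2$ it is not absolutely integrable. There I would interpret the integral as an improper (Abel-regularized) Hankel integral, which is still covered by the tabulated formula because it requires only $\mathrm{Re}\,\mu>-1/2$ in the convergent range. Alternatively, I would verify the formula directly in the other direction: compute the Fourier transform of the proposed density using the known Hankel transform of $r^{\mu}K_\mu(ar)$, which is absolutely convergent, and conclude by uniqueness of characteristic functions.

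As a sanity check, I would verify the case $n=2$ probabilistically. Then $\Gamma(1)=1$ and the formula becomes $\frac{\lambda^2}{2\pi c^2}K_0\!\left(\frac{\lambda}{c}r\right)$, which is the known transform of $\lambda^2/(\lambda^2+c^2\rho^2)$.
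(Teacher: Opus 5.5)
Your proposal is correct and follows the paper's proof step for step: polar-coordinate Fourier inversion to the Hankel integral $\frac{1}{2\pi}(\lambda/c)^n\int_0^\infty \rho J_0(\rho r)(\rho^2+\lambda^2/c^2)^{-n/2}\,d\rho$, then the tabulated $J_0$--$K$ formula with $\nu=0$, $\mu=n/2-1$, $a=\lambda/c$, then $K_{-\nu}=K_\nu$; your treatment of the non-integrable cases $n=1,2$ adds rigor the paper omits. One small slip there: the formula's validity condition is $-1<\operatorname{Re}\nu<2\operatorname{Re}\mu+\tfrac32$, i.e.\ $\mu>-\tfrac34$ when $\nu=0$, not $\operatorname{Re}\mu>-\tfrac12$ (which would exclude $n=1$, where $\mu=-\tfrac12$), but both the correct condition and your forward-transform-plus-uniqueness fallback cover $n=1$.
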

\begin{proof}
By calculating the inverse Fourier transform of formula (\ref{infcondf}), we have that
\begin{align}
f_n(x,y)&=\frac{1}{(2\pi)^2}\int_{-\infty}^{+\infty}\int_{-\infty}^{+\infty}e^{-i\alpha x-i\beta y}\left(\frac{\lambda}{\sqrt{\lambda^2+c^2(\alpha^2+\beta^2)}}\right)^n\,d\alpha\,d\beta\nonumber\\
&=\frac{\lambda^n}{(2\pi)^2}\int_0^\infty\int_0^{2\pi}\frac{e^{-ir[x\cos\theta+y\sin\theta]}}{(\lambda^2+c^2r^2)^{n/2}}\,d\theta\,r\,dr\nonumber\\
&=\frac{\lambda^n}{2\pi}\int_0^\infty\frac{r\,J_0\!\left(r\sqrt{x^2+y^2}\right)}{(\lambda^2+c^2r^2)^{n/2}}\,dr.\nonumber
\end{align}
By using the identity (see Lebedev \cite{lebedev})
\begin{equation}
\int_0^\infty \frac{x^{\nu+1}J_\nu(bx)}{(x^2+a^2)^{\mu+1}}\,dx=\frac{a^{\nu-\mu}\,b^\mu}{2^\mu\;\Gamma(\mu+1)}K_{\nu-\mu}(ab),\qquad a>0,\,b>0,\;\; -1<\text{Re}(\nu)<2\,\text{Re}(\mu)+\frac{3}{2}\label{JKform}
\end{equation}
we can write that
\begin{align}
f_n(x,y)=\frac{1}{2^{\frac{n}{2}}\pi\;\Gamma\!\left(\frac n2\right)}\left(\frac{\lambda}{c}\right)^{\frac{n}{2}+1}\!(x^2+y^2)^{\frac{1}{2}\left(\frac{n}{2}-1\right)}\;K_{1-\frac{n}{2}}\!\left(\frac{\lambda}{c}\sqrt{x^2+y^2}\right).\nonumber
\end{align}
The proof is completed by recalling that $K_{\nu}(x)=K_{-\nu}(x)$.
\end{proof}
It is now interesting to observe that the probability density function (\ref{lalalalellsls}) is isotropic. Clearly, this implies that the distribution can be decomposed into its angular and radial independent components. While the angular component is uniformly distirbuted over $[0,2\pi)$ by construction, the radial component exhibits a nontrivial distribution. To derive it, we switch from the Cartesian coordinates $(x,y)$ to polar coordinates $(r,\theta)$ by means of the usual transformation
\begin{equation}\begin{cases}x=r\,\cos\theta\\y=r\,\sin\theta.\end{cases}\label{polarcoords}\end{equation}
By removing the angolar component from the density (\ref{lalalalellsls}), and taking into account the Jacobian of the transformation (\ref{polarcoords}), the radial component of the distribution is
\begin{equation}\widehat{f_n}(r):=2\pi r\,f_n(r\,\cos\theta,\,r\,\sin\theta)=\frac{1}{2^{\frac{n}{2}-1}\Gamma\!\left(\frac{n}{2}\right)}\left(\frac{\lambda}{c}\right)^{\frac{n}{2}+1}r^{\frac{n}{2}}K_{\frac{n}{2}-1}\!\left(\frac{\lambda}{c}r\right).\label{everyindimpad}\end{equation}
For the sake of completeness, we show that the radial density (\ref{everyindimpad}) integrates 1, and thus is a proper probability density function. For this purpose, we recall the Mellin transform of the Bessel function (Watson \cite{watson}, p. 388)
\begin{equation}\label{mellinklexver}\int_0^\infty x^{s-1}K_\nu(x)\,dx=2^{s-2}\Gamma\!\left(\frac{s-\nu}{2}\right)\Gamma\!\left(\frac{s+\nu}{2}\right),\qquad\operatorname{Re}(s)>|\operatorname{Re}(\nu)|,\qquad s,\nu\in\mathbb{C}.\end{equation}
This permits us to write that
\begin{align}\int_0^\infty \widehat{f_n}(r)\,dr&=\frac{1}{2^{\frac{n}{2}-1}\Gamma\!\left(\frac{n}{2}\right)}\left(\frac{\lambda}{c}\right)^{\frac{n}{2}+1}\int_0^\infty r^{\frac{n}{2}}K_{\frac{n}{2}-1}\!\left(\frac{\lambda}{c}r\right)\,dr \nonumber\\
&=\frac{1}{2^{\frac{n}{2}-1}\Gamma\!\left(\frac{n}{2}\right)}\left(\frac{\lambda}{c}\right)^{\frac{n}{2}+1}\left(\frac{c}{\lambda}\right)^{\frac{n}{2}+1}\int_0^\infty x^{\frac{n}{2}}K_{\frac{n}{2}-1}(x)\,dx \nonumber\\
&=\frac{1}{2^{\frac{n}{2}-1}\Gamma\!\left(\frac{n}{2}\right)}2^{\frac{n}{2}-1}\Gamma(1)\Gamma\!\left(\frac{n}{2}\right)=1\nonumber
\end{align} which yields the desired result.\\
\noindent Having obtained the exact distribution of the killed process $\bigl(X(\mathcal{T}),Y(\mathcal{T})\bigr)$ conditional on the number of Poisson events until the killing occurs, we now turn to the corresponding unconditional distribution. In principle, this distribution can be obtained either by averaging the conditional density $f_n(x,y)$ with respect to $N(\mathcal{T})$, or by computing the inverse Fourier transform of the characteristic function in \eqref{infiunccf}. As we will see, both approaches lead to the same result. Although an elementary closed-form expression for the unconditional density appears to be unavailable, we provide a representation based on an expansion in orthogonal polynomials. In particular, for $n\in\mathbb{N}_0$, we consider the Bessel polynomials (see Krall and Frink \cite{krallfrink})
\begin{equation}y_n(x)=\sum_{k=0}^n\frac{(n+k)!}{(n-k)!\;k!}\left(\frac{x}{2}\right)^k,\qquad x\in\mathbb{R}.\label{tanabespoldef}\end{equation}
It is well known that the Bessel polynomials are strictly related to the Bessel functions. Indeed, we recall that the Bessel function $K_{\nu}(\cdot)$ reduces to a polynomial in the case in which $\nu$ is a half-integer (see Watson \cite{watson}, p. 80). In particular, for \(n\in\mathbb{N}\), it holds that
\begin{align}
K_{n-\frac{1}{2}}(x)
&= \sqrt{\frac{\pi}{2x}}\,e^{-x}\sum_{k=0}^{n-1}\frac{(n-1+k)!}{(n-1-k)!\;k!}\frac{1}{(2x)^k}.\label{tanabesfundef}
\end{align}
By comparing formulas (\ref{tanabespoldef}) and (\ref{tanabesfundef}), it immediately follows that, for $n\in\mathbb{N}_0$,
\begin{equation}y_n(x)=\sqrt{\frac{2}{\pi x}}\;e^{\frac{1}{x}}\,K_{n+\frac{1}{2}}\left(\frac{1}{x}\right).\label{papalameri}\end{equation}
Thus, we are able to prove the following result.

\begin{thm}\label{thm:tempestfg}For $(x,y)\in\mathbb{R}\setminus\{(0,0)\}$, it holds that
\begin{align}
f(x,y)=&\frac{\lambda^2p(1-p)}{2\pi c^2}\;K_0\!\left(\frac{\lambda}{c}\sqrt{p(2-p)}\,\sqrt{x^2+y^2}\right)\nonumber\\
&\quad+ \frac{\lambda p}{2\pi c}\frac{e^{-\frac{\lambda}{c}\sqrt{x^2+y^2}}}{\sqrt{x^2+y^2}}
\left[
1+\frac{1}{2}\sum_{n=0}^{\infty}
\left(\frac{2\lambda(1-p)^2\sqrt{x^2+y^2}}{c}\right)^{n+1}
\!\!\frac{n!}{(2n+1)!}
\;y_n\!\left(\frac{c}{\lambda\sqrt{x^2+y^2}}\right)
\right]. \label{mutubenult}
\end{align}
\end{thm}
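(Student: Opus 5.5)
We obtain $f(x,y)$ by averaging the conditional densities (\ref{lalalalellsls}) against the geometric law (\ref{geometricdistribution}), splitting the sum according to the parity of $n$, in the same spirit as the computation of $f_{\mathrm{even}}$ and $f_{\mathrm{odd}}$ in the univariate case. Write $\rho=\sqrt{x^2+y^2}$ and $\mu=\lambda/c$. Then
$$f(x,y)=\sum_{n\ge1}p(1-p)^{n-1}f_n(x,y),$$
and we treat the even terms $n=2m$ and the odd terms $n=2m+1$ separately. Throughout, the interchange of sums is justified by positivity.

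\emph{Even part.} For $n=2m$ the order $\nu=m-1$ of $K_\nu$ is an integer, and the sum
$$\frac{\mu^2 p}{\pi}\sum_{m\ge1}(1-p)^{2m-1}\frac{(\mu\rho/2)^{m-1}}{(m-1)!}\,K_{m-1}(\mu\rho)$$
has the shape of the multiplication (Neumann-type) theorem
$$K_0\big(\mu\rho\sqrt{1-2t}\big)=\sum_{k\ge0}\frac{(t\mu\rho)^k}{k!}K_k(\mu\rho).$$
Matching $t\mu\rho$ with $(1-p)^2\mu\rho/2$ gives $t=(1-p)^2/2$, so that $1-2t=p(2-p)$. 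This yields exactly
$$\frac{\lambda^2p(1-p)}{2\pi c^2}\,K_0\big(\mu\sqrt{p(2-p)}\,\rho\big).$$
As a cross-check we can alternatively invert the even part of the characteristic function,
$$\frac{\lambda^2 p(1-p)}{\lambda^2+c^2r^2-\lambda^2(1-p)^2},$$
whose two-dimensional inverse Fourier transform is a $K_0$ by (\ref{JKform}) with $\mu=0$, $\nu=0$.

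\emph{Odd part.} For $n=2m+1$ the order $m-\frac12$ is a half-integer. Using (\ref{tanabesfundef}) and $\Gamma(m+\frac12)=\frac{(2m)!\sqrt\pi}{4^m m!}$, each term becomes $e^{-\mu\rho}/\rho$ times a polynomial in $\rho$. The $m=0$ term gives
$$\frac{\lambda p}{2\pi c}\,\frac{e^{-\mu\rho}}{\rho},$$
which is the leading $1$ in the bracket. For $m\ge1$ we use (\ref{papalameri}) in the form
$$K_{m-\frac12}(\mu\rho)=\sqrt{\frac{\pi}{2\mu\rho}}\;e^{-\mu\rho}\,y_{m-1}\!\left(\frac{1}{\mu\rho}\right).$$
Reindexing $n=m-1$ should produce the factor $(2\mu(1-p)^2\rho)^{n+1}\frac{n!}{(2n+1)!}$ after simplifying $\frac{4^m m!}{(2m)!}\,2^{-m}$ against the powers of $\mu\rho$. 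The main step is this bookkeeping of constants, namely $2^{m}$, the Gamma duplication, and the shift from $m!/(2m)!$ to $n!/(2n+1)!$ together with the factor $\frac12$. I will verify it on $m=1$, where $y_0=1$, against a direct evaluation of $f_3$.

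\emph{Main obstacle.} The main obstacle is the even part: one must recognize and correctly apply the $K_0$ multiplication theorem, including its convergence condition $|t|<\frac12$, which holds here since $(1-p)^2<1$. If the citation is in doubt, I will instead use the Fourier-inversion route, which reduces to (\ref{JKform}) applied to the rational even part of (\ref{infiunccf}). This route uses only the earlier tools in the paper and also confirms the constant $\frac{\lambda^2p(1-p)}{2\pi c^2}$.
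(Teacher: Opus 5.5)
Your proposal is correct and follows the paper's proof: you use the same parity split, and you treat the odd part the same way ($K_{\pm1/2}$, the duplication formula and (\ref{papalameri})). For the even part, the $K_0$ multiplication theorem you cite is exactly what the paper derives on the spot by inserting the integral representation of $K_\nu$ and summing the exponential series, and your Fourier cross-check is the paper's subsequent verification theorem.

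One small slip: since $\mu^{m+1}\rho^{m-1}/2^m=\frac{\mu^2}{2}(\mu\rho/2)^{m-1}$, the prefactor of your displayed even sum should be $\frac{\mu^2p}{2\pi}$, not $\frac{\mu^2p}{\pi}$. It is the corrected prefactor that yields the constant $\frac{\lambda^2p(1-p)}{2\pi c^2}$ you state.
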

\begin{proof}
By recalling formulas (\ref{inffdefuncond}) and (\ref{inffdefcond}), and taking into account that $N(\mathcal{T})$ is geometrically distributed, we can calculate the density function $f(x,y)$ as
\begin{equation}f(x,y)=\sum_{n=1}^{\infty}p(1-p)^{n-1}f_n(x,y).\label{wdsum}\end{equation}
We treat the even and odd terms of the sum in (\ref{wdsum}) separately. In particular, by defining
$$f_{\mathrm{even}}(x,y)=\sum_{n=1}^{\infty}p(1-p)^{2n-1}f_{2n}(x,y),\qquad\qquad f_{\mathrm{odd}}(x,y)=\sum_{n=0}^{\infty}p(1-p)^{2n}f_{2n+1}(x,y)$$
it holds that
$$f(x,y)=f_{\mathrm{even}}(x,y)+f_{\mathrm{odd}}(x,y).$$
We start by analyzing the even terms of the sum. We have that
\begin{align}
f_{\mathrm{even}}(x,y)&=\frac{\lambda p}{\pi(1-p)c}\sum_{n=1}^{\infty}\frac{(1-p)^{2n}}{2^n\;\Gamma(n)}\left(\frac{\lambda}{c}\right)^n(x^2+y^2)^{\frac{n-1}{2}}K_{n-1}\!\left(\frac{\lambda}{c}\sqrt{x^2+y^2}\right) \nonumber\\
%&=\frac{\lambda p}{\pi(1-p)c}\sum_{n=1}^{\infty}\left(\frac{\lambda(1-p)^2}{2c}\right)^n\frac{(x^2+y^2)^{\frac{n-1}{2}}}{(n-1)!}\;K_{n-1}\!\left(\frac{\lambda}{c}\sqrt{x^2+y^2}\right) \nonumber\\
&=\frac{\lambda^2p(1-p)}{2\pi c^2}\sum_{n=0}^{\infty}\frac{1}{n!}\left(\frac{\lambda(1-p)^2\sqrt{x^2+y^2}}{2c}\right)^n K_n\!\left(\frac{\lambda}{c}\sqrt{x^2+y^2}\right). \nonumber
\end{align}
By using the following integral representation of the modified Bessel function of the second kind $K_{\nu}(\cdot)$ (see Watson \cite{watson})
\begin{align}
K_\nu(z)
&=\frac{1}{2}\left(\frac{z}{2}\right)^\nu\int_0^\infty e^{-t-\frac{z^2}{4t}}t^{-\nu-1}\,dt,\qquad z>0,\ \nu\in\mathbb R.
\end{align}
we can write
\begin{align}
f_{\mathrm{even}}(x,y)
%&=\frac{\lambda^2p(1-p)}{2\pi c^2}\sum_{n=0}^{\infty}\left(\frac{\lambda(1-p)^2}{2c}\right)^n\frac{(x^2+y^2)^{n/2}}{n!}K_n\!\left(\frac{\lambda}{c}\sqrt{x^2+y^2}\right) \nonumber\\
&=\frac{\lambda^2p(1-p)}{2\pi c^2}\sum_{n=0}^{\infty}\left(\frac{\lambda(1-p)^2}{2c}\right)^n
\frac{(x^2+y^2)^{n/2}}{n!}\frac{1}{2}\left(\frac{\lambda\sqrt{x^2+y^2}}{2c}\right)^n
\int_0^\infty e^{-t-\frac{\lambda^2(x^2+y^2)}{4c^2t}}t^{-n-1}\,dt \nonumber\\
&=\frac{\lambda^2p(1-p)}{4\pi c^2}\int_0^\infty e^{-t-\frac{\lambda^2(x^2+y^2)}{4c^2t}}\frac{1}{t}
\sum_{n=0}^{\infty}\frac1{n!}\left(\frac{\lambda^2(1-p)^2(x^2+y^2)}{4c^2t}\right)^n\,dt \nonumber\\
&=\frac{\lambda^2p(1-p)}{4\pi c^2}\int_0^\infty \frac{1}{t}
\exp\left\{-t-\frac{\lambda^2(x^2+y^2)}{4c^2t}+\frac{\lambda^2(1-p)^2(x^2+y^2)}{4c^2t}\right\}\,dt \nonumber\\
&=\frac{\lambda^2p(1-p)}{4\pi c^2}\int_0^\infty \frac{1}{t}
\exp\left\{-t-\frac{\lambda^2p(2-p)(x^2+y^2)}{4c^2t}\right\}\,dt \nonumber\\
&=\frac{\lambda^2p(1-p)}{2\pi c^2}\;K_0\!\left(\frac{\lambda}{c}\sqrt{p(2-p)}\,\sqrt{x^2+y^2}\right).\nonumber
\end{align}
As for the odd-order terms, by definition of $f_{\mathrm{odd}}(x,y)$, we can write that
\begin{align}
f_{\mathrm{odd}}(x,y)
&= \sum_{n=0}^{\infty} p(1-p)^{2n}\frac{1}{2^{n+\frac{1}{2}}\pi}\left(\frac{\lambda}{c}\right)^{n+\frac32}
\frac{(x^2+y^2)^{\frac{1}{2}(n-\frac{1}{2})}}{\Gamma\left(n+\frac{1}{2}\right)}
K_{n-\frac{1}{2}}\!\left(\frac{\lambda}{c}\sqrt{x^2+y^2}\right). \nonumber
\end{align}

By separating the term \(n=0\) from the remaining terms, and recalling that
\begin{equation}
K_{-\frac{1}{2}}(z)=K_{\frac{1}{2}}(z)= \sqrt{\frac{\pi}{2z}}\,e^{-z}\label{Khalfpm}
\end{equation}
we can write that
\begin{align}
f_{\mathrm{odd}}(x,y)
&= \frac{\lambda p}{2\pi c}\frac{e^{-\frac{\lambda}{c}\sqrt{x^2+y^2}}}{\sqrt{x^2+y^2}}+\sum_{n=1}^{\infty} p(1-p)^{2n}\frac{1}{2^{n+\frac{1}{2}}\pi}\left(\frac{\lambda}{c}\right)^{n+\frac{3}{2}}
\frac{(x^2+y^2)^{\frac{1}{2}(n-\frac{1}{2})}}{\Gamma\left(n+\frac{1}{2}\right)}
K_{n-\frac{1}{2}}\!\left(\frac{\lambda}{c}\sqrt{x^2+y^2}\right).\nonumber
\end{align}
The proof is complete by using the gamma duplication formula
$$\Gamma\left(n+\frac{1}{2}\right)= 2^{1-2n}\sqrt{\pi}\,\frac{\Gamma(2n)}{\Gamma(n)}$$ and the identity (\ref{papalameri}).
\end{proof}
In Theorem \ref{thm:tempestfg}, we have finally obtained an explicit expression for the probability density function of the killed process $\big(X(\mathcal{T}),\;Y(\mathcal{T}))$. This density was obtained by first conditioning the density on the number of Poisson events $N(\mathcal{T})$ and then averaging to remove the conditioning. However, in principle, the unconditional density can also be obtained by computing the inverse Fourier transform of the unconditional characteristic function (\ref{infiunccf}). In the next theorem, we follow this alternative approach to provide a verification of the density obtained in Theorem \ref{thm:tempestfg}.
\begin{thm}The inverse Fourier transform of (\ref{infiunccf}) coincides with the probability density function (\ref{mutubenult}).\end{thm}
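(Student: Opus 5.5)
The plan is to invert the characteristic function (\ref{infiunccf}) directly. Afterwards we identify the result with (\ref{mutubenult}) through an even/odd splitting of the geometric series, but carried out on the Fourier side. Put $\rho=\sqrt{\alpha^2+\beta^2}$, $R=\sqrt{x^2+y^2}$ and $S=\sqrt{\lambda^2+c^2\rho^2}$. Multiplying numerator and denominator by $S+\lambda(1-p)$ gives
\begin{equation*}
\frac{\lambda p}{S-\lambda(1-p)}=\frac{\lambda p\,S}{S^2-\lambda^2(1-p)^2}+\frac{\lambda^2p(1-p)}{S^2-\lambda^2(1-p)^2},\qquad S^2-\lambda^2(1-p)^2=c^2\rho^2+\lambda^2p(2-p).
\end{equation*}
The second summand is rational in $\rho^2$. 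Since it corresponds to the even powers of the geometric series, it should reproduce $f_{\mathrm{even}}$.

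First I invert the rational term. Passing to polar coordinates exactly as in the proof of formula (\ref{lalalalellsls}), its inverse transform is
\begin{equation*}
\frac{\lambda^2p(1-p)}{2\pi}\int_0^\infty\frac{r J_0(rR)}{c^2r^2+\lambda^2p(2-p)}\,dr.
\end{equation*}
Applying (\ref{JKform}) with $\nu=\mu=0$ turns this into $\frac{\lambda^2p(1-p)}{2\pi c^2}K_0\big(\frac{\lambda}{c}\sqrt{p(2-p)}R\big)$, which is the first line of (\ref{mutubenult}).

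Second, I handle the term $\lambda p S/(S^2-\lambda^2(1-p)^2)$. I will not invert it in closed form. Instead I expand it as
\begin{equation*}
\frac{\lambda p}{S}\sum_{n\ge0}\left(\frac{\lambda(1-p)}{S}\right)^{2n}=\sum_{n\ge0}\lambda p(1-p)^{2n}\left(\frac{\lambda}{S}\right)^{2n+1}.
\end{equation*}
The series converges uniformly because $\lambda(1-p)/S\le 1-p<1$. Each term is then inverted with (\ref{JKform}), taking $\nu=0$ and $\mu=n-\tfrac12$; this yields $p(1-p)^{2n}f_{2n+1}(x,y)$. Termwise inversion needs justification. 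For $n\ge1$ the terms are integrable in $(\alpha,\beta)$. The $n=0$ term $\lambda p/S$ is not integrable, so I treat it separately as an improper (Abel-regularized) integral; there (\ref{JKform}) with $\mu=-\tfrac12$ still holds and gives $\frac{\lambda p}{2\pi c}e^{-\lambda R/c}/R$. Alternatively, dominated convergence can be applied to the tail $\sum_{n\ge1}$, whose sum behaves like $\rho^{-3}$ at infinity.

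Third, I convert the half-integer Bessel functions $K_{n+1/2}$ to Bessel polynomials. For this I use (\ref{papalameri}) with argument $c/(\lambda R)$, together with the duplication formula for $\Gamma(n+\tfrac32)$. This should produce the bracket in (\ref{mutubenult}) with the coefficients $n!/(2n+1)!$ and the powers $(2\lambda(1-p)^2R/c)^{n+1}$. This bookkeeping, shifting $n\mapsto n+1$ and matching the factor $\tfrac12$, is routine but error-prone, and I expect it to be where care is needed.

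The main obstacle is the rigorous handling of the non-integrable $1/S$ part of the inversion, i.e.\ interpreting the inverse Fourier transform in the distributional or improper sense. I would address it by inverting $e^{-\epsilon\rho}\times(\cdot)$ and letting $\epsilon\to0$ for $R>0$.
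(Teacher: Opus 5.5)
Your proposal is correct and follows the paper's proof essentially step for step: you rationalize the denominator, obtain the $K_0$ term from (\ref{JKform}) with $\nu=\mu=0$, expand the remaining term as a geometric series in $\lambda^2(1-p)^2/S^2$, invert termwise with $\nu=0,\ \mu=n-\tfrac12$, and convert the $K_{n+1/2}$ to Bessel polynomials via (\ref{papalameri}) and the duplication formula. Your reading of the two pieces as $f_{\mathrm{even}}$ and $f_{\mathrm{odd}}$ and your justification of the termwise inversion are extra care that the paper omits; two small corrections: the expanded series should read $\sum_{n\ge0}p(1-p)^{2n}(\lambda/S)^{2n+1}$, without the stray leading factor $\lambda$ (consistent with your stated conclusion $p(1-p)^{2n}f_{2n+1}$), and the rational term, which decays like $\rho^{-2}$, is not integrable on $\mathbb{R}^2$ either, so your $e^{-\epsilon\rho}$ regularization should cover it too.
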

\begin{proof}Denoting by $f(x,y)$ the inverse Fourier transform of (\ref{infiunccf}), it holds that
\begin{align}
f(x,y) &= \frac{1}{(2\pi)^2}\int_{-\infty}^{+\infty}\int_{-\infty}^{+\infty} e^{-i\alpha x-i\beta y}\frac{\lambda p}{\sqrt{\lambda^2+c^2(\alpha^2+\beta^2)}-\lambda(1-p)}\,d\alpha\,d\beta \nonumber\\
&= \frac{\lambda p}{(2\pi)^2}\int_0^{+\infty}\int_0^{2\pi}\frac{e^{-ixr\cos\theta-iyr\sin\theta}}{\sqrt{\lambda^2+c^2r^2}-\lambda(1-p)}\,r\,d\theta\,dr= \frac{\lambda p}{2\pi}\int_0^{+\infty}\frac{J_0\left(r\sqrt{x^2+y^2}\right)\,r}{\sqrt{\lambda^2+c^2r^2}-\lambda(1-p)}\,dr \nonumber\\
&= \frac{\lambda p}{2\pi}\int_0^{+\infty}\frac{rJ_0\left(r\sqrt{x^2+y^2}\right)\left[\sqrt{\lambda^2+c^2r^2}+\lambda(1-p)\right]}{c^2r^2+\lambda^2p(2-p)}\,dr \nonumber\\
&= \frac{\lambda^2p(1-p)}{2\pi}\int_0^{+\infty}\frac{rJ_0\left(r\sqrt{x^2+y^2}\right)}{c^2r^2+\lambda^2p(2-p)}\,dr+ \frac{\lambda p}{2\pi}\int_0^{+\infty}\frac{r\sqrt{\lambda^2+c^2r^2}\,J_0\left(r\sqrt{x^2+y^2}\right)}{c^2r^2+\lambda^2p(2-p)}\,dr \nonumber\\
&= \frac{\lambda^2p(1-p)}{2\pi c^2}K_0\left(\frac{\lambda\sqrt{p(2-p)}\sqrt{x^2+y^2}}{c}\right) + \frac{\lambda p}{2\pi}\int_0^{+\infty}\frac{r\,\sqrt{\lambda^2+c^2r^2}\,J_0\left(r\sqrt{x^2+y^2}\right)}{c^2r^2+\lambda^2p(2-p)}\,dr \nonumber\\
&= \frac{\lambda^2p(1-p)}{2\pi c^2}K_0\left(\frac{\lambda\sqrt{p(2-p)}\sqrt{x^2+y^2}}{c}\right)+ \frac{\lambda p}{2\pi}\int_0^{+\infty}\frac{r\,J_0\left(r\sqrt{x^2+y^2}\right)}{\sqrt{c^2r^2+\lambda^2}}\sum_{k=0}^{\infty}\left(\frac{\lambda^2(1-p)^2}{\lambda^2+c^2r^2}\right)^k\,dr.\nonumber
\end{align}
By reversing the order of summation and integration and using formula (\ref{JKform}), we finally obtain that
\begin{align}
f(x,y) &= \frac{\lambda^2p(1-p)}{2\pi c^2}K_0\left(\frac{\lambda\sqrt{p(2-p)}\sqrt{x^2+y^2}}{c}\right)+ \frac{\lambda p}{2\pi}\sum_{n=0}^{\infty}\left(\lambda^2(1-p)^2\right)^n\int_0^{+\infty}\frac{r\,J_0\left(r\sqrt{x^2+y^2}\right)}{(c^2r^2+\lambda^2)^{n+\frac{1}{2}}}\,dr\nonumber\\
&= \frac{\lambda^2p(1-p)}{2\pi c^2}K_0\left(\frac{\lambda\sqrt{p(2-p)}\sqrt{x^2+y^2}}{c}\right)\nonumber\\
&\qquad\qquad+ \frac{\lambda p}{2\pi c}\sqrt{\frac{2\lambda}{c\sqrt{x^2+y^2}}}\sum_{n=0}^{\infty}\left(\frac{\lambda(1-p)^2\sqrt{x^2+y^2}}{2c}\right)^{\!\!n}\; \frac{K_{n-\frac{1}{2}}\left(\frac{\lambda}{c}\sqrt{x^2+y^2}\right)}{\Gamma\left(n+\frac{1}{2}\right)}.\label{traum}
\end{align}
Isolating the first term of the infinite series in (\ref{traum}) yields
\begin{align}
f(x,y) &= \frac{\lambda^2p(1-p)}{2\pi c^2}K_0\left(\frac{\lambda\sqrt{p(2-p)}\sqrt{x^2+y^2}}{c}\right)\nonumber\\
&\qquad\qquad+ \frac{\lambda p}{2\pi c}\sqrt{\frac{2\lambda}{c\sqrt{x^2+y^2}}}\Bigg[\frac{1}{\sqrt{\pi}}\,K_{-\frac{1}{2}}\left(\frac{\lambda}{c}\sqrt{x^2+y^2}\right)\nonumber\\
&\qquad\qquad\qquad\qquad\qquad\qquad+\sum_{n=0}^{\infty}\left(\frac{\lambda(1-p)^2\sqrt{x^2+y^2}}{2c}\right)^{\!\!n+1}\; \frac{K_{n+\frac{1}{2}}\left(\frac{\lambda}{c}\sqrt{x^2+y^2}\right)}{\Gamma\left(n+\frac{3}{2}\right)}\Bigg].\nonumber
\end{align}
The identities (\ref{papalameri}) and (\ref{Khalfpm}) now imply that
\begin{align}
f(x,y) &= \frac{\lambda^2p(1-p)}{2\pi c^2}K_0\left(\frac{\lambda\sqrt{p(2-p)}\sqrt{x^2+y^2}}{c}\right)\nonumber\\
&\qquad\qquad+ \frac{\lambda p}{2\pi c}\;\frac{e^{-\frac{\lambda}{c}\sqrt{x^2+y^2}}}{\sqrt{x^2+y^2}}\left[1+\sqrt{\pi}\sum_{n=0}^{\infty}\left(\frac{\lambda(1-p)^2\sqrt{x^2+y^2}}{2c}\right)^{\!\!n+1}\; \frac{y_n\left(\frac{c}{\lambda \sqrt{x^2+y^2}}\right)}{\Gamma\left(n+\frac{3}{2}\right)}\right].\nonumber
\end{align}
By applying the gamma duplication formula, we finally obtain
\begin{align}
f(x,y) &= \frac{\lambda^2p(1-p)}{2\pi c^2}K_0\left(\frac{\lambda\sqrt{p(2-p)}\sqrt{x^2+y^2}}{c}\right)\nonumber\\
&\qquad\qquad+ \frac{\lambda p}{2\pi c}\;\frac{e^{-\frac{\lambda}{c}\sqrt{x^2+y^2}}}{\sqrt{x^2+y^2}}\left[1+\frac{1}{2}\sum_{n=0}^{\infty}\left(\frac{2\lambda(1-p)^2\sqrt{x^2+y^2}}{c}\right)^{\!\!n+1}\,\frac{n!}{(2n+1)!}\;y_n\left(\frac{c}{\lambda \sqrt{x^2+y^2}}\right)\right]\nonumber
\end{align}
which coincides with the probability density function (\ref{mutubenult}). This completes the proof.
\end{proof}
We have now obtained the probability density $f(x,y)$ of the planar motion at the killing time, i.e. the random vector $\big(X(\mathcal{T}),\;Y(\mathcal{T}))$. As in the conditional case (\ref{lalalalellsls}), the unconditional density is isotropic, allowing us to derive the corresponding radial density. As a consistency check on the results obtained so far, we verify that this radial density integrates to one. To this end, we again use the decomposition $f(x,y)=f_{\mathrm{even}}(x,y)+f_{\mathrm{odd}}(x,y)$ introduced in Theorem \ref{thm:tempestfg} and integrate the contributions of $f_{\mathrm{even}}$ and $f_{\mathrm{odd}}$ separately. By eliminating the angular component of the distribution, switching to polar coordinates and taking into account the Jacobian of the transformation, the radial components of $f_{\mathrm{even}}$ and $f_{\mathrm{odd}}$ are respectively
$$\widehat{f_{\mathrm{even}}}(r)=\frac{\lambda^2p(1-p)\;r}{c^2}\;K_0\!\left(\frac{\lambda}{c}\sqrt{p(2-p)}\,r\right)$$
and
$$\widehat{f_{\mathrm{odd}}}(r)=\frac{\lambda p}{c}\;e^{-\frac{\lambda}{c}r}\left[1+\frac{1}{2}\sum_{n=0}^{\infty}\left(\frac{2\lambda(1-p)^2}{c}\right)^{n+1}\frac{n!}{(2n+1)!}\;r^{n+1}\,y_n\left(\frac{c}{\lambda r}\right)\right]. $$
By integrating with respect to $r$, and taking into account formula (\ref{mellinklexver}), we obtain that
$$\int_0^\infty \widehat{f_{\mathrm{even}}}(r)\,dr=\frac{1-p}{2-p}$$
and
\begin{align}\int_0^\infty \widehat{f_{\mathrm{odd}}}(r)\,dr&=p+\frac{\lambda p}{2c}\sum_{n=0}^{\infty}\left(\frac{2\lambda(1-p)^2}{c}\right)^{n+1}\frac{n!}{(2n+1)!}\int_0^\infty e^{-\frac{\lambda}{c}r}r^{n+1}y_n\left(\frac{c}{\lambda r}\right)\,dr \nonumber\\
&=p+\frac{\lambda p}{2c}\sum_{n=0}^{\infty}\left(\frac{2\lambda(1-p)^2}{c}\right)^{n+1}\frac{n!}{(2n+1)!}\sum_{k=0}^n\frac{(n+k)!}{(n-k)!\,k!}\left(\frac{c}{2\lambda}\right)^k\int_0^\infty e^{-\frac{\lambda}{c}r}r^{n-k+1}\,dr \nonumber\\
&=p+\frac{\lambda p}{2c}\sum_{n=0}^{\infty}\left(\frac{2\lambda(1-p)^2}{c}\right)^{n+1}\frac{n!}{(2n+1)!}\left(\frac{c}{\lambda}\right)^{n+2}\sum_{k=0}^n\frac{(n+k)!}{k!}(n-k+1)\left(\frac{1}{2}\right)^k \nonumber\\
&=p+\frac{\lambda p}{2c}\sum_{n=0}^{\infty}\left(\frac{2\lambda(1-p)^2}{c}\right)^{n+1}\,\left(\frac{c}{\lambda}\right)^{n+2}\frac{1}{2^n} \nonumber\\
&=p+p\sum_{n=0}^{\infty}(1-p)^{2n+2}=\frac{1}{2-p}\nonumber
\end{align}
where we have used the identity
$$\sum_{k=0}^n\frac{(n+k)!}{k!}(n-k+1)\left(\frac12\right)^k=\frac{(2n+1)!}{2^n\;n!}.$$
Denoting by $\widehat{f}(r)$ the radial component of $f(x,y)$, we are finally able to verify that
$$\int_0^\infty \widehat{f}(r)\,dr=\int_0^\infty \widehat{f_{\mathrm{even}}}(r)\,dr+\int_0^\infty \widehat{f_{\mathrm{odd}}}(r)\,dr=1.$$

We have now completed our analysis of the distribution of the planar motion with infinitely many directions at the killing time. Following the approach adopted in the one-dimensional case, we conclude our study of the planar model by investigating its distribution conditional on survival up to time $t$. For $x^2+y^2<c^2t^2$, we define the probability density function of the continuous component of the distribution as
$$g(x,y,t)\,dx\,dy=\mathbb{P}\bigl(X(t)\in dx,\;Y(t)\in dy\mid\mathcal{T}>t\bigr),$$
and its counterpart conditional on the number of Poisson events,
$$g_n(x,y,t)\,dx\,dy=\mathbb{P}\bigl(X(t)\in dx,\;Y(t)\in dy\mid N(t)=n,\;\mathcal{T}>t\bigr),\qquad n\geq1.$$
We begin with the conditional density $g_n$. By the same thinning argument used in Section 2, conditional on survival up to time $t$, the direction changes are governed by a homogeneous Poisson process with intensity $\lambda(1-p)$. However, as shown by Orsingher and De Gregorio \cite{orsingherdegreg}, the distribution of the random motion conditional on the number of direction changes does not depend on the Poisson intensity. In particular, they proved that
$$g_n(x,y,t)=\frac{n}{2\pi(ct)^n}\left(c^2t^2-x^2-y^2\right)^{\frac n2-1},\qquad x^2+y^2<c^2t^2,\quad n\geq1.$$
For $n=0$, the distribution is uniform on the circumference $x^2+y^2=c^2t^2$. The density conditional only on survival is then obtained by averaging over the number of events of the thinned Poisson process. Thus, the continuous component of the distribution reads
$$
\begin{aligned}
g(x,y,t)&=e^{-\lambda(1-p)t}\sum_{n=1}^{\infty}\frac{\bigl(\lambda(1-p)t\bigr)^n}{n!}\,g_n(x,y,t)\\
&=\frac{\lambda(1-p)}{2\pi c}\frac{\exp\!\left\{-\lambda(1-p)t+\frac{\lambda(1-p)}{c}\sqrt{c^2t^2-x^2-y^2}\right\}}{\sqrt{c^2t^2-x^2-y^2}},\qquad x^2+y^2<c^2t^2.
\end{aligned}
$$
The remaining mass, equal to $e^{-\lambda(1-p)t}$, is uniformly distributed on the circumference $x^2+y^2=c^2t^2$.\\
Similarly to the univariate case, we are now interested in deriving the joint distribution of the position of the planar process at the killing time and the killing time itself. The support of the random vector $\big(X(\mathcal{T}),Y(\mathcal{T}),\mathcal{T}\big)$ is the infinite cone
$$C=\left\{(x,y,t)\in\mathbb{R}^3:\;x^2+y^2\leq c^2t^2,\;t>0\right\}.$$
As in the one-dimensional case, the distribution has a singular component on the boundary of $C$. Indeed,
$$\mathbb{P}\left(X^2(\mathcal{T})+Y^2(\mathcal{T})=c^2\mathcal{T}^2\right)=\mathbb{P}(\mathcal{T}=T_1)=p,$$
since the boundary identifies trajectories killed at the first direction change. Hence, the same conditioning argument used in the one-dimensional case gives
\begin{align}
\mathbb{P}\Big(X(\mathcal{T})\in dx,\,Y(\mathcal{T})\in dy,\,\mathcal{T}\in dt\Big)&=\lambda\,p\,e^{-\lambda p t}\,\mathbb{P}\Big(X(t)\in dx,\,Y(t)\in dy\Big\lvert \mathcal{T}>t\Big)\,dt\nonumber\\
=\Big[\frac{\lambda p\,e^{-\lambda t}}{\pi}\,\delta\left(x^2+y^2-c^2t^2\right)&+\lambda p\,e^{-\lambda pt}\,g(x,y,t)\Big]dx\,dy\,dt,\qquad\qquad (x,y,t)\in C.\label{quazpsk}
\end{align}
At this point, the distribution of the killed planar process follows directly.
\begin{thm}
For $t>0$ and $x^2+y^2\leq c^2t^2$, it holds that
\begin{align}
\frac{\mathbb{P}\Big(X(t\wedge\mathcal{T})\in dx,Y(t\wedge\mathcal{T})\in dy\Big)}{dx\,dy}=&{}\frac{e^{-\lambda t}}{\pi}\delta\left(x^2+y^2-c^2t^2\right)\nonumber\\
+\mathbbm{1}_{\{x^2+y^2<c^2t^2\}}\Bigg[\frac{\lambda p}{2\pi c\,\sqrt{x^2+y^2}}&\exp\left\{-\frac{\lambda}{c}\sqrt{x^2+y^2}\right\}+e^{-\lambda pt}g(x,y,t)+\lambda p\int_{\sqrt{x^2+y^2}/c}^{t}e^{-\lambda ps}g(x,y,s)\,ds\Bigg].\nonumber\end{align}
\end{thm}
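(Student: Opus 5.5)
We follow the same decomposition as in the one-dimensional case, splitting on whether killing has occurred by time $t$:
\begin{align*}
\mathbb{P}\big(X(t\wedge\mathcal{T})\in dx,Y(t\wedge\mathcal{T})\in dy\big)
=\;&\mathbb{P}\big(X(t)\in dx,Y(t)\in dy,\mathcal{T}>t\big)\\
&+\int_0^t\mathbb{P}\big(X(\mathcal{T})\in dx,Y(\mathcal{T})\in dy,\mathcal{T}\in ds\big).
\end{align*}

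For the first term we multiply the conditional law given survival by $\mathbb{P}(\mathcal{T}>t)=e^{-\lambda pt}$, which is available from the exponential law of $\mathcal{T}$ established in Section 2. The singular part carries conditional mass $e^{-\lambda(1-p)t}$, spread uniformly on the circle of radius $ct$, so after multiplication its mass is $e^{-\lambda t}$. Since $\delta(x^2+y^2-c^2t^2)\,dx\,dy$ integrates to $\pi$ over the plane, this contributes $\frac{e^{-\lambda t}}{\pi}\delta(x^2+y^2-c^2t^2)$. The absolutely continuous part contributes $e^{-\lambda pt}g(x,y,t)$ on the open disk.

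For the second term we integrate formula (\ref{quazpsk}) over $s\in(0,t)$, treating its two pieces separately.
\begin{itemize}
\item The continuous piece is $\lambda p\int e^{-\lambda ps}g(x,y,s)\,ds$. Because $g(x,y,s)$ vanishes unless $x^2+y^2<c^2s^2$, the lower limit becomes $s=\sqrt{x^2+y^2}/c$.
\item The singular piece is $\frac{\lambda p}{\pi}\int_0^t e^{-\lambda s}\delta(x^2+y^2-c^2s^2)\,ds$, which we evaluate by a change of variables in the delta function. Writing $\rho=\sqrt{x^2+y^2}$, the root is $s_0=\rho/c$, and $|\frac{d}{ds}(c^2s^2)|=2c^2s_0=2c\rho$. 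Hence the integral equals $\mathbbm{1}_{\{\rho<ct\}}\frac{\lambda p}{2\pi c\rho}e^{-\lambda\rho/c}$, which is exactly the term stated.
\end{itemize}

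As a check, this singular piece matches the $n=0$ term of $f_{\mathrm{odd}}$ in Theorem \ref{thm:tempestfg}. That is expected, since it corresponds to killing at the first event.

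The main point requiring care is the interpretation of the shell delta. We must read $\delta(x^2+y^2-c^2s^2)$ consistently with the normalisation used in (\ref{quazpsk}), and check that its mass there is $\lambda p e^{-\lambda s}\,ds$. We verify the latter directly: integrating $\frac{1}{\pi}\delta(\rho^2-c^2s^2)\,2\pi\rho\,d\rho$ gives $1$, so the mass is $\lambda pe^{-\lambda s}$. It equals $\mathbb{P}(\mathcal{T}=T_1\in ds)=pe^{-\lambda s}\lambda\,ds$, as it should.

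The boundary point $\rho=ct$ receives only the contribution from trajectories with no events before $t$. Points in the interior receive the three absolutely continuous contributions listed above. Summing the pieces yields the stated formula.
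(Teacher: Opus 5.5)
Your proposal is correct and follows the same route as the paper: you split on whether $\mathcal{T}>t$, take the first term from the survival law multiplied by $e^{-\lambda pt}$, and obtain the second by integrating formula (\ref{quazpsk}) over $s\in(0,t)$. You also spell out what the paper leaves as a one-line remark, namely evaluating the shell delta at the root $s_0=\sqrt{x^2+y^2}/c$, which yields the term $\frac{\lambda p}{2\pi c\sqrt{x^2+y^2}}e^{-\frac{\lambda}{c}\sqrt{x^2+y^2}}$.
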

\begin{proof}
Conditioning on whether killing occurs before time $t$ gives
$$
\begin{aligned}
&\mathbb{P}\big(X(t\wedge\mathcal{T})\in dx,Y(t\wedge\mathcal{T})\in dy\big)\\
&\qquad=
\mathbb{P}\big(X(t)\in dx,Y(t)\in dy,\mathcal{T}>t\big)
+
\int_0^t
\mathbb{P}\big(X(\mathcal{T})\in dx,Y(\mathcal{T})\in dy,\mathcal{T}\in ds\big).
\end{aligned}
$$
The result follows by using formula (\ref{quazpsk}).
\end{proof}

\appendix
\section{On the sum of independent Laplace random variables}\label{appendixLaplace}
In Section \ref{sec:uniT}, we made use of the distribution of the sum of independent Laplace random variables. The probability density function of this sum, given in formula (\ref{intro:sumlapl}), was derived by Kotz et al. \cite{kotz} by exploiting properties of modified Bessel functions. In this appendix, we present an alternative derivation based on the inversion of the bilateral Laplace transform.\\
For $n\in\mathbb{N}$, let $L_1,\ldots,L_n$ be independent and identically distributed random variables with density
$$
h_1(x)=\frac{b}{2}e^{-b|x|},\qquad b>0,\quad x\in\mathbb{R}.
$$
The moment generating function of each $L_k$ is
$$
\mathbb{E}\left[e^{\alpha L_k}\right]=\frac{b^2}{b^2-\alpha^2},\qquad |\alpha|<b.
$$
Therefore, by independence of the Laplace random variables, if
$$
S_n=\sum_{k=1}^nL_k,
$$
then
$$
\mathbb{E}\left[e^{\alpha S_n}\right]=\frac{b^{2n}}{(b^2-\alpha^2)^n},\qquad |\alpha|<b.
$$
Let $h_n$ denote the probability density function of $S_n$. By inversion of the bilateral Laplace transform,
$$
h_n(x)=\frac{b^{2n}}{2\pi i}\int_{\gamma-i\infty}^{\gamma+i\infty}\frac{e^{-\alpha x}}{(b^2-\alpha^2)^n}\,d\alpha,\qquad \gamma\in(-b,b).
$$
The integrand has two poles of order $n$ at $\alpha=\pm b$. The half-plane in which the integration contour must be closed depends on the sign of $x$, since the exponential term $e^{-\alpha x}$ must vanish as $|\alpha|\to\infty$.
For $x>0$, the contour is closed in the right half-plane and the only enclosed pole is $\alpha=b$. Therefore, by computing the $n$-th order residue, we have that
\begin{align*}
h_n(x)
&=-\frac{1}{(n-1)!}\lim_{\alpha\to b}\frac{d^{n-1}}{d\alpha^{n-1}}\left[(\alpha-b)^n\frac{b^{2n}e^{-\alpha x}}{(b^2-\alpha^2)^n}\right]\\
&=-\frac{b^{2n}(-1)^n}{(n-1)!}\lim_{\alpha\to b}\frac{d^{n-1}}{d\alpha^{n-1}}\left[e^{-\alpha x}(\alpha+b)^{-n}\right]\\
&=-\frac{b^{2n}(-1)^n}{(n-1)!}\lim_{\alpha\to b}\sum_{k=0}^{n-1}\binom{n-1}{k}(-x)^{n-1-k}e^{-\alpha x}(-1)^k\frac{(n-1+k)!}{(n-1)!}(\alpha+b)^{-n-k}\\
&=\left(\frac{b}{2}\right)^n\frac{e^{-bx}}{(n-1)!}\sum_{k=0}^{n-1}\frac{(n-1+k)!}{(n-1-k)!k!}\frac{x^{n-1-k}}{(2b)^k},\qquad x>0.
\end{align*}
For $x<0$, the contour is instead closed in the left half-plane and the only enclosed pole is $\alpha=-b$. Hence
\begin{align*}
h_n(x)
&=\frac{1}{(n-1)!}\lim_{\alpha\to-b}\frac{d^{n-1}}{d\alpha^{n-1}}\left[(\alpha+b)^n\frac{b^{2n}e^{-\alpha x}}{(b^2-\alpha^2)^n}\right]\\
&=\frac{b^{2n}}{(n-1)!}\lim_{\alpha\to-b}\frac{d^{n-1}}{d\alpha^{n-1}}\left[e^{-\alpha x}(b-\alpha)^{-n}\right]\\
&=\frac{b^{2n}}{(n-1)!}\lim_{\alpha\to-b}\sum_{k=0}^{n-1}\binom{n-1}{k}(-x)^{n-1-k}e^{-\alpha x}\frac{(n-1+k)!}{(n-1)!}(b-\alpha)^{-n-k}\\
&=\left(\frac{b}{2}\right)^n\frac{e^{bx}}{(n-1)!}\sum_{k=0}^{n-1}\frac{(n-1+k)!}{(n-1-k)!k!}\frac{(-x)^{n-1-k}}{(2b)^k},\qquad x<0.
\end{align*}
Combining the two cases, and extending the expression to $x=0$ by continuity, we finally obtain
$$
h_n(x)=\left(\frac{b}{2}\right)^n\frac{e^{-b|x|}}{(n-1)!}\sum_{k=0}^{n-1}\frac{(n-1+k)!}{(n-1-k)!k!}\frac{|x|^{n-1-k}}{(2b)^k},\qquad x\in\mathbb{R}.
$$

\bibliographystyle{plain}
\nocite{*}
\bibliography{bibliography}

@book{kotz,
  title={The Laplace distribution and generalizations: a revisit with applications to communications, economics, engineering, and finance},
  author={Kotz, Samuel and Kozubowski, Tomasz and Podgorski, Krzystof},
  year={2012},
  publisher={Springer Science \& Business Media}}

@book{lebedev,
  title={Special functions and their applications},
  author={Lebedev, Nikola{\u\i}},
  year={1965},
  publisher={Prentice-Hall inc.},
  address={London}}

@book{watson,
  title     = {{A Treatise on the Theory of Bessel Functions}},
  author    = {Watson, George Neville},
  edition   = {2},
  publisher = {Cambridge University Press},
  address   = {Cambridge},
  year      = {1995}}

@article{krallfrink,
  title={{A new class of orthogonal polynomials: The Bessel polynomials}},
  author={Krall, Harry L and Frink, Orrin},
  journal={Transactions of the American Mathematical Society},
  volume={65},
  number={1},
  pages={100--115},
  year={1949}}

@article{orsingher,
  title={Probability law, flow function, maximum distribution of wave-governed random motions and their connections with {K}irchoff's laws},
  author={Orsingher, Enzo},
  journal={Stochastic Processes and their Applications},
  volume={34},
  number={1},
  pages={49--66},
  year={1990}
}

@book{lastpenrose,
  title={Lectures on the Poisson process},
  author={Last, G{\"u}nter and Penrose, Mathew},
  volume={7},
  year={2018},
  publisher={Cambridge University Press}
}

@article{orsingherdegreg,
  title={Random flights in higher spaces},
  author={Orsingher, Enzo and De Gregorio, Alessandro},
  journal={Journal of Theoretical Probability},
  volume={20},
  number={4},
  pages={769--806},
  year={2007}
}

@article{iuliano,
  title={On the Telegraph Process Driven by Geometric Counting Process with Poisson-Based Resetting: AD Crescenzo et al.},
  author={Di Crescenzo, Antonio and Iuliano, Antonella and Mustaro, Verdiana and Verasani, Gabriella},
  journal={Journal of Statistical Physics},
  volume={190},
  number={12},
  pages={191},
  year={2023}
}

@article{martinucci,
  title={Some results on the telegraph process driven by gamma components},
  author={Martinucci, Barbara and Meoli, Alessandra and Zacks, Shelemyahu},
  journal={Advances in Applied Probability},
  volume={54},
  number={3},
  pages={808--848},
  year={2022}
}

@article{evans,
  title={Run and tumble particle under resetting: a renewal approach},
  author={Evans, Martin R and Majumdar, Satya N},
  journal={Journal of Physics A: Mathematical and Theoretical},
  volume={51},
  number={47},
  pages={475003},
  year={2018}
}

@article{ratanov,
  title={Telegraph processes with random jumps and complete market models},
  author={Ratanov, Nikita},
  journal={Methodology and Computing in Applied Probability},
  volume={17},
  number={3},
  pages={677--695},
  year={2015}
}

@article{goldstein,
  title={On diffusion by discontinuous movements, and on the telegraph equation},
  author={Goldstein, Sidney},
  journal={The Quarterly Journal of Mechanics and Applied Mathematics},
  volume={4},
  number={2},
  pages={129--156},
  year={1951}
}

@article{kac,
  title={A stochastic model related to the telegrapher's equation},
  author={Kac, Mark},
  journal={The Rocky Mountain Journal of Mathematics},
  volume={4},
  number={3},
  pages={497--509},
  year={1974}
}

@article{cane,
  title={Diffusion models with relativity effects},
  author={Cane, Violet R},
  journal={Journal of Applied Probability},
  volume={12},
  number={S1},
  pages={263--273},
  year={1975}
}

@article{beghin2001probabilistic,
  title={Probabilistic analysis of the telegrapher' s process with drift by means of relativistic transformations},
  author={Beghin, Luisa and Nieddu, Luciano and Orsingher, Enzo},
  journal={International Journal of Stochastic Analysis},
  volume={14},
  number={1},
  pages={11--25},
  year={2001}
}

@article{orsingher2000exact,
  title={Exact joint distribution in a model of planar random motion},
  author={Orsingher, Enzo},
  journal={Stochastics: An International Journal of Probability and Stochastic Processes},
  volume={69},
  number={1-2},
  pages={1--10},
  year={2000}
}

@article{dicrminimal,
  title={Exact transient analysis of a planar random motion with three directions},
  author={Di Crescenzo, Antonio},
  journal={Stochastics: An International Journal of Probability and Stochastic Processes},
  volume={72},
  number={3-4},
  pages={175--189},
  year={2002}
}

@article{lachal,
  title={Cyclic random motions in $\mathbb{R}^d$-space with $n$ directions},
  author={Lachal, Aim{\'e}},
  journal={ESAIM: Probability and Statistics},
  volume={10},
  pages={277--316},
  year={2006},
  publisher={EDP Sciences}
}

@article{vortex,
  title={Planar random motions in a vortex},
  author={Orsingher, Enzo and Marchione, Manfred Marvin},
  journal={Journal of Theoretical Probability},
  volume={38},
  number={1},
  pages={4},
  year={2025}
}

@article{masoliver,
  author = {Masoliver, Jaume and Weiss, George H.},
  title = {Telegrapher's equations with variable propagation speeds},
  journal = {Physical Review E},
  volume = {49},
  number = {5},
  pages = {3852--3854},
  year = {1994}
}

@article{angelanietal,
  author = {Angelani, L. and {Di Leonardo}, R. and Paoluzzi, M.},
  title = {First-passage time of run-and-tumble particles},
  journal = {The European Physical Journal E},
  volume = {37},
  pages = {59},
  year = {2014}
}

@article{OMSV,
  author = {Orsingher, Enzo and Marchione, Manfred Marvin},
  title = {One-dimensional and planar random motions with variable propagation speeds},
  journal = {Journal of Theoretical Probability},
  volume = {39},
  number = {2},
  pages = {36},
  year = {2026}
}

@book{magnus1943formeln,
  title={Formeln und S{\"a}tze f{\"u}r die speziellen Funktionen der mathematischen Physik},
  author={Magnus, Wilhelm and Oberhettinger, Fritz},
  year={1943},
  publisher={Springer},
  address={Berlin, Heidelberg}
}
\end{document}